\newtheorem{theorem}{Theorem}[section]
\newtheorem{lemma}[theorem]{Lemma}
\newtheorem{proposition}[theorem]{Proposition}
\newtheorem{corollary}[theorem]{Corollary}
\newtheorem{definition}[theorem]{Definition}
\newtheorem{remark}[theorem]{Remark}
\newtheorem*{acknowledgements}{Acknowledgements}
\newcommand{\CC}{\mathcal{C}}
\newcommand{\HH}{\mathbb{H}}
\newcommand{\C}{\mathbb{C}}
\newcommand{\R}{\mathbb{R}}
\newcommand{\multeq}{multiple equilibrium point}
\newcommand{\eqpt}{equilibrium point}
\newcommand{\Res}{\textrm{Res}}
\newcommand{\vf}{\frac{d}{dz}}
\newcommand{\codim}{{\rm codim}}
\begin{document}
\title{On Parameter Space of Complex Polynomial Vector Fields in $\C$}
\author{Kealey Dias \vspace{0.2cm}\\ {\small (with an appendix by Dias and Tan Lei)}}
\date{\today}
\maketitle
\begin{abstract}
The space $\Xi_d$ of degree $d$ single-variable monic and centered complex polynomial vector fields can be  decomposed into loci in which the vector fields have the same topological structure. We analyze the geometric structure of these loci and describe some bifurcations, in particular, it is proved that new homoclinic separatrices can form under small perturbation. By an example, we show that this decomposition  of parameter space by combinatorial data is not a cell decomposition.\newline
The appendix to this article, joint work with Tan Lei, shows that landing separatrices are stable under small perturbation of the vector field if the multiplicities of the equilibrium points are preserved.
\end{abstract}
%% %%%%%%%%%%%%% subject classification
\begin{figure}[b]%
\flushleft
\rule[0mm]{54.0mm}{0.15mm}\\ \hspace*{5.0mm}%
{\footnotesize  2010 Mathematics Subject Classification: 37F75, 34Cxx, 34C23, 34C37.\par
 Keywords and phrases: holomorphic foliations and vector fields, polynomial vector field, bifurcations, homoclinic solutions, qualitative theory of ordinary differential equations, differential equations in the complex domain, Abelian differential,  quadratic differential, translation surface.}
\end{figure}
%-----------------------
\section{Introduction}
The objects we consider are the vector fields in $\C$  that in a global chart take the form $P(z)\vf$, with $P(z)=z^d+a_{d-2}z^{d-2}+\cdots +a_0$, with $z$ and $a_i \in \C$. We are interested in the global qualitative dynamics of the integral curves of these vector fields, or equivalently, solutions to the real-time, first order ordinary differential equations $\dot{z}=P(z)$ (with $P$ as above), where the dot is the derivative with respect to time, $t\in \R$. The space $\Xi_d \simeq \mathbb{C}^{d-1}$ of these vector fields of
degree $d$ can be decomposed into loci $\mathcal{C}$ in which the
vector fields have the same \emph{combinatorial data set} (to be defined). We will prove that each
of these combinatorial classes is a connected manifold with well-defined (real)
dimension $q$, which is the dimension of the combinatorial class as a subspace in $\Xi_d$. \par
We present now a summary of some necessary concepts and definitions. It can be shown that $\infty$ is a pole of order $d-2$ for vector fields $\xi_P \in \Xi_d$. There are $2(d-1)$ trajectories $\gamma_{\ell}$ which meet at infinity with asymptotic angles $  \frac{2 \pi\ell}{2\left(d-1\right)}$, $\ell \in \{ 0,1,\dots,2d-3 \}$.
When the labelling index $\ell$ is even,  the trajectories are called \emph{incoming} to $\infty$, and when the index $\ell$ odd, they are called \emph{outgoing} from $\infty$ (see Figure \ref{trajsatinfty2}).\par
There are $2d-2$ accesses to $\infty$ defined by the trajectories at infinity. An \emph{end} $e_{\ell}$ is infinity with access between $\gamma_{\ell-1}$ and $\gamma_{\ell}$ (see Figure \ref{trajsatinfty2}). An \emph{odd end} is an end $e_k$ labelled by an odd index $k$, and an \emph{even end} is an end $e_j$ labelled by an even index $j$.\par
 \begin{figure}%
    \centering
    \resizebox{!}{5cm}{\input{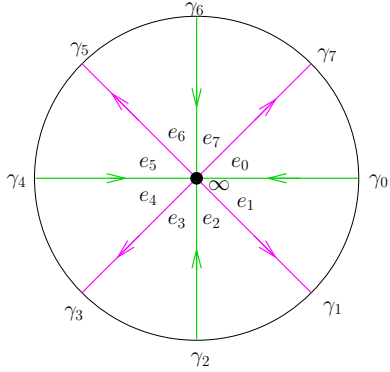}}
    \caption{The point at $\infty$ is a pole of order $d-2$ for vector fields $\xi_P \in \Xi_d$. There are $2(d-1)$ trajectories $\gamma_{\ell}$ which meet at infinity with asymptotic angles $  \frac{2 \pi\ell}{2\left(d-1\right)}$, $\ell \in \{ 0,1,\dots,2d-3 \}$. There are $2d-2$ accesses to $\infty$ defined by the trajectories at infinity. An \emph{end} $e_{\ell}$ is infinity with access between $\gamma_{\ell-1}$ and $\gamma_{\ell}$. An \emph{odd end} is an end $e_k$ labelled by an odd index $k$, and an \emph{even end} is an end $e_j$ labelled by an even index $j$.}
    \label{trajsatinfty2}
 \end{figure}
\emph{Separatrices}  $s_{\ell}$  are the maximal trajectories of $\xi_P$ incoming to and outgoing from $\infty$ (in finite time). They are labelled also by the $2(d-1)$ asymptotic angles.
A separatrix $s_{\ell}$ is called \emph{landing} if $\bar{s}_{\ell}\setminus s_{\ell}=\zeta$, where $\zeta$ is an equilibrium point for $\xi_P$ (equivalently, a zero of $P$). A separatrix
$s_{\ell}=s_{k,j}$ is called \emph{homoclinic} if $\bar{s}_{k,j}\setminus s_{k,j}=\emptyset$. See Figures \ref{centerzoneshade}, \ref{sepalzoneshade}, and \ref{alphaomegashade} for some examples of landing and homoclinic separatrices. A separatrix for a polynomial vector field $\xi_P \in \Xi_d$ can only be either homoclinic or landing. A homoclinic separatrix $s_{k,j}$ is labelled by the one odd index $k$ and the one even index $j$ corresponding to its two asymptotic directions at infinity.
The \emph{Separatrix graph:} $\Gamma_P=\bigcup \limits_{\ell=0}^{2d-3}\hat{s}_{\ell}$, that is, the union of all separatrices and any equilibrium points at which they land, as well as the point at infinity. completely determines the topological structure of the trajectories of a vector field (see, for instance, \cite{DN1975}, \cite{ALGM1973}).\par
%----------------------------------------------
\subsection{Zones}
The connected components $Z$ of $\mathbb{C}\setminus \Gamma_P$ are called \emph{zones}.
There are three types of zones for vector fields in $\Xi_d$, and the types of zones are determined by the types of their boundaries:
 \begin{itemize}
 \item
 A \emph{center zone} $Z$ contains an equilibrium point, which is a center, in its interior. Its boundary consists of one or several homoclinic separatrices and the point at infinity. If a center zone is on the left of $n$ homoclinic separatrices $s_{k_1,j_1},\dots,s_{k_n,j_n}$ on the boundary $\partial Z$, then the center zone has $n$ odd ends $e_{k_1},\dots,e_{k_n}$ at infinity on $\partial Z$ and the zone is called either a \emph{counter-clockwise center zone} or an \emph{odd center zone}. If a center zone is on the right of $n$ homoclinic separatrices $s_{k_1,j_1},\dots,s_{k_n,j_n}$ on the boundary $\partial Z$, then the center zone has $n$ even ends $e_{j_1},\dots,e_{j_n}$ at infinity on $\partial Z$ and the zone is called either a \emph{clockwise center zone} or an \emph{even center zone} (see Figure \ref{centerzoneshade}).
  \item
 A \emph{sepal zone} $Z$ has exactly one equilibrium point on the boundary, which is both the $\alpha$-limit point and $\omega$-limit point for all trajectories in $Z$ (i.e. $\zeta_{\alpha}= \zeta_{\omega}$). This equilibrium point is necessarily a multiple equilibrium point. The boundary $\partial Z$ contains exactly one incoming and one outgoing landing separatrix,  the point at infinity, and possibly one or several homoclinic separatrices.   If a sepal zone is to the left of $n$ homoclinic separatrices
$s_{k_1,j_1},\dots,s_{k_n,j_n}$ on its boundary, then it has $n+1$ odd ends on the
boundary: $e_{k_1},\dots,e_{k_n}$ and $e_{j_i+1}$ for some corresponding $j_i$,
depending on how one orders the separatrices. In this case, it is called an \emph{odd} sepal zone. Similarly, if a sepal zone is on the right of
$n$ homoclinic separatrices $s_{k_1,j_1},\dots,s_{k_n,j_n}$ on
its boundary, then it has $n+1$ even ends on the boundary, $e_{j_1},\dots,e_{j_n}$
and $e_{k_i+1}$ for some corresponding $k_i$, again depending on the ordering of
the separatrices. In this case, it is called an \emph{even} sepal zone (see Figure \ref{sepalzoneshade}).
\item
An \emph{$\alpha \omega$-zone} $Z$ has two equilibrium points on the boundary, $\zeta_{\alpha}\neq \zeta_{\omega}$, the $\alpha$-limit point and $\omega$-limit point for all trajectories in $Z$. The boundary $\partial Z$ contains one or two incoming landing separatrices and one or two outgoing landing separatrices, possibly one or several homoclinic separatrices, and the point at infinity. If an ${\rm \alpha \omega}$-zone is both on the left of $n_1$ homoclinic separatrices
$s_{k_1,j_1},\dots,s_{k_{n_1},j_{n_1}}$ and on the right of $n_2$ homoclinic
separatrices $s_{k_1,j_1},\dots,s_{k_{n_2},j_{n_2}}$ on the boundary, then the ${\rm
\alpha \omega}$-zone has $n_1+1$ odd ends ($e_{k_1},\dots,e_{k_{n_1}}$ and
$e_{j_i+1}$ for some corresponding $j_i$) and $n_2+1$ even ends
($e_{j_1},\dots,e_{j_{n_2}}$ and $e_{k_i+1}$ for some corresponding $k_i$) on the
boundary (see Figure \ref{alphaomegashade}).
 \end{itemize}
 \begin{remark}
 It will be important to note for an $\alpha \omega$-zone, there are exactly one odd end and one even end, neither of whose indices coincide with any index of a homoclinic separatrix (in the notation above the odd and even ends are $e_{j_i+1}$ and $e_{k_i+1}$ respectively).
 \end{remark}
   %%%%----------------------
\subsection{Transversals}
There are several ways to encode the combinatorial structure of a vector field. The author's preferred descriptions rely on objects called \emph{transversals}.  We define in this section the important structures needed to understand  definitions of a \emph{combinatorial data set}.\par
In any simply connected domain avoiding zeros of $P$, the differential $\frac{dz}{P(z)}$ has an antiderivative,
unique up to addition by a constant
\begin{equation}
\Phi(z)=\int_{z_0}^{z} \frac{dw}{P(w)}.\nonumber
\end{equation}
Note that
\begin{equation}
\Phi_{\ast}\left(\xi_P\right)=\Phi'\left(z\right)P\left(z\right)\frac{ d}{ dz}=\frac{ d}{ dz}.
\end{equation}
The coordinates $w=\Phi(z)$ are, for this reason, called \emph{rectifying coordinates}.
%----------------------------------------
We will call the images of zones under rectifying coordinates \emph{rectified zones}. The rectified zones and corresponding boundaries are of the following types:
\begin{itemize}
\item
 The image of a center zone (minus a curve contained in the zone which joins the center $\zeta$ and $\infty$) under $\Phi$ is a vertical half strip. It is an upper vertical half strip for a counterclockwise center zone, and the odd ends and homoclinic separatrices are mapped to the lower boundary. It is a lower vertical half strip for a clockwise center zone, and the even ends and homoclinic separatrices are mapped to the upper boundary of this half strip (see Figure \ref{centerzoneshade}).\par
%%%%
  \begin{figure}%
    \centering
    \resizebox{!}{6cm}{\input{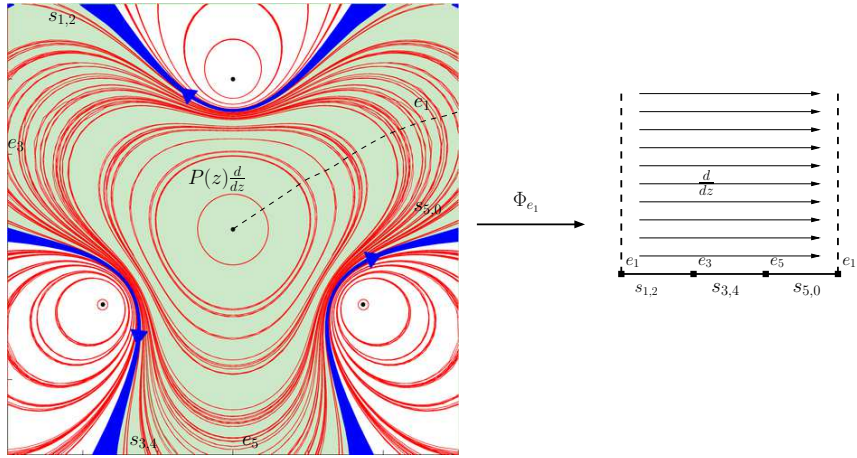}}
    \caption{Pictured are the trajectories of a vector field with four center zones: one odd center zone (shaded) with homoclinic separatrices $s_{5,0}$, $s_{1,2}$, and $s_{3,4}$ and ends $e_1$, $e_3$, and $e_5$ on the boundary; and three even center zones, each with one homoclinic separatrix and one end on the boundary. The image of a center zone (minus a curve contained in the zone which joins the center $\zeta$ and $\infty$) under $\Phi$ is a vertical half strip. It is an upper vertical half strip for a counterclockwise center zone, and  a lower vertical half strip for a clockwise center zone. In this figure, there is an odd center zone mapped to an upper vertical half strip.}
    \label{centerzoneshade}
 \end{figure}
%%%%
\item
The image of an odd sepal zone under $\Phi$ is an upper half plane, where odd ends and homoclinic separatrices are mapped to the lower boundary of this half plane. The image of an even sepal zone under $\Phi$ is a lower half plane, where even ends and homoclinic separatrices are mapped to the upper boundary of this half plane (see Figure \ref{sepalzoneshade}).
%%%%
  \begin{figure}%
    \centering
    \resizebox{!}{6cm}{\input{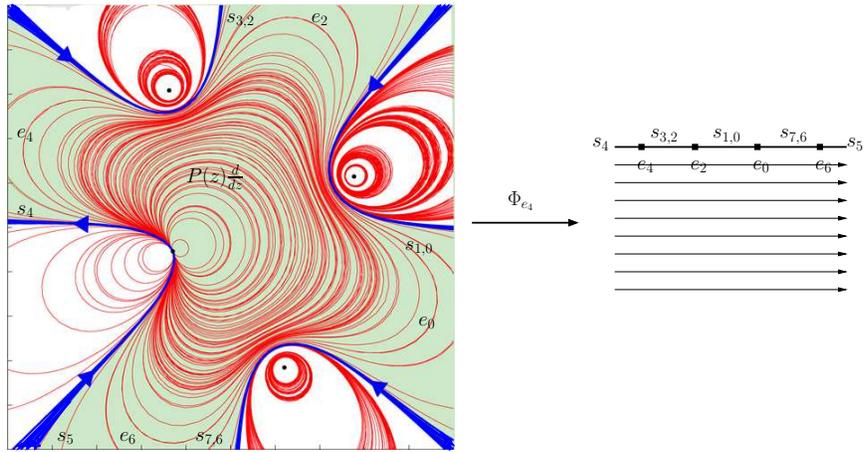}}
    \caption{Pictured are the trajectories of a vector field with an even sepal zone (shaded). On the boundary of the sepal zone is the double equilibrium point which is both the $\alpha$ and $\omega$ limit point of the trajectories; one incoming landing separatrix $s_4$ and one outgoing landing separatrix $s_5$; three homoclinic separatrices $s_{1,0}$, $s_{3,2}$, and $s_{7,6}$; and four ends at infinity $e_0$, $e_2$, $e_4$, and $e_6$. There is an odd sepal zone (not shaded) which shares the equilibrium point and the landing separatrices with the shaded sepal zone, but it has no homoclinic separatrices and only one odd end $e_5$ on the boundary. The image of an odd sepal zone under $\Phi$ is an upper half plane, and the image of an even sepal zone is a lower half plane. In this figure, there is an even sepal zone mapped to a lower half plane.}
    \label{sepalzoneshade}
 \end{figure}
%%%%
\item
The image of an $\alpha \omega$-zone under $\Phi$ is a horizontal strip (see Figure \ref{alphaomegashade}). The lower boundary of the strip consists of two landing separatrices, odd ends, and counterclockwise homoclinic separatrices on the boundary of the zone. The upper boundary of the strip consists of two landing separatrices, even ends, and clockwise homoclinic separatrices on the boundary of the zone.
%%%%
  \begin{figure}%
    \centering
    \resizebox{!}{6cm}{\input{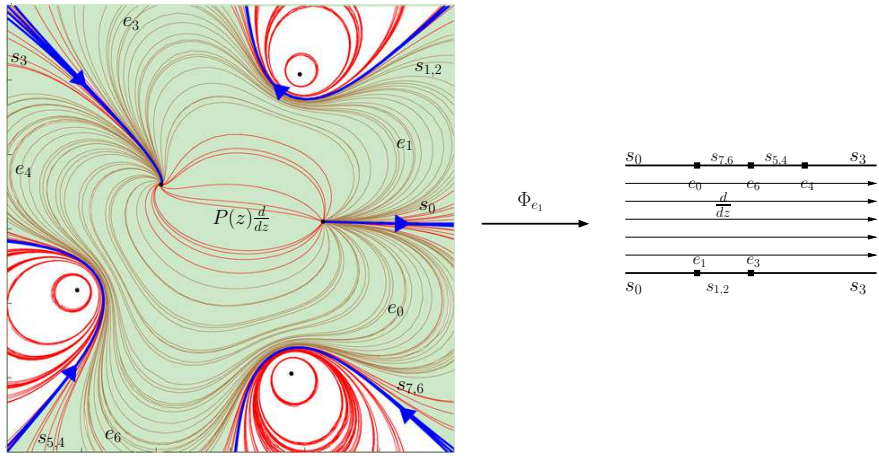}}
    \caption{Pictured are the trajectories of a vector field with an $\alpha \omega$-zone (shaded). On the boundary of the  zone are two equilibrium points: one which is the $\alpha$-limit point of the trajectories and the other is the $\omega$-limit point of the trajectories. Also on the boundary are one incoming landing separatrix $s_0$ and one outgoing landing separatrix $s_3$. The zone  is to the left of the homoclinic separatrix $s_{1,2}$  and on the right of the two  homoclinic separatrices $s_{5,4}$, and $s_{7,6}$. Finally, the boundary contains two odd ends $e_1$ and $e_3$ and three even ends $e_0$, $e_4$, and $e_6$ at infinity. The image of an $\alpha \omega$-zone under $\Phi$ is a horizontal strip.}
    \label{alphaomegashade}
 \end{figure}
%%%%
\end{itemize}
 Via the rectifying coordinates, it is evident that there are a number of closed geodesics through $ \infty$ in $\hat{\mathbb{C}} \setminus \{\text{equilibrium pts}  \}$ in the metric with length element $\frac{|\rm{d}z|}{|P(z)|}$. Among these are the $h$ homoclinic separatrices, and there are $s$ \emph{distinguished transversals} (defined below).
 \begin{definition}
  The \emph{distinguished transversal} $T_{k,j}$ is the geodesic in the metric $\frac{|\rm{d}z|}{|P(z)|}$ joining the ends $e_k$ and $e_j$, avoiding the separatrices and equilibrium points, where $e_j$ is the left-most end on the upper boundary and $e_k$ is the right-most end on the lower boundary of the strip that is the image of the $\alpha \omega$-zone in which the transversal is contained (see Figure \ref{transstrip}).
 \end{definition}
 Note that the way in which the distinguished transversal is chosen, the indices of the ends it joins are exactly those ends whose indices will never coincide with the indices of any homoclinic separatrices.
%%%%
  \begin{figure}
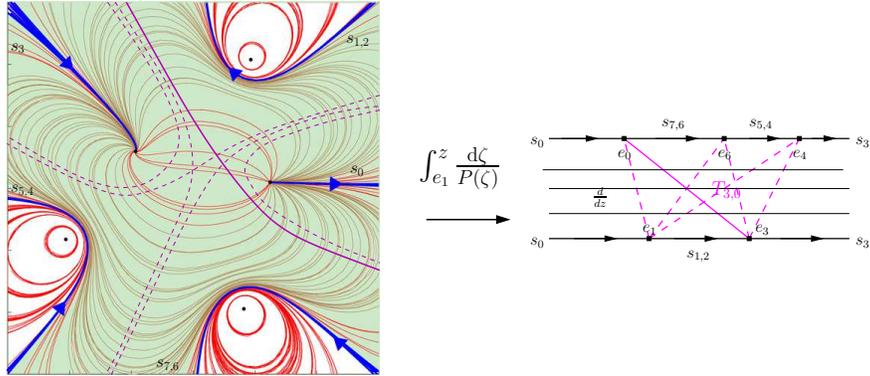
%
    \centering
    \begin{minipage}{5cm}
    \resizebox{!}{5cm}{\input{transversals2_defensepres.pstex_t}}
    \end{minipage}
    \hspace{.25cm}
    \begin{minipage}{1cm}
    \resizebox{!}{1cm}{\input{rectmap.pstex_t}}
    \end{minipage}
    \hspace{.25cm}
    \begin{minipage}{2cm}
    \resizebox{!}{2cm}{\input{transversals_diststrip.pstex_t}}
    \end{minipage}
    \caption{ Each $\alpha \omega$-zone is isomorphic to a strip. There may be several transversals which avoid the equilibrium points and separatrices (the dashed curves), but there is exactly one distinguished transversal for each $\alpha \omega$-zone (in this case, $T_{3,0}$). We define the distinguished transversal to be the geodesic in the metric $|dz|/|P(z)|$ joining the ends $e_k$ and $e_j$ (in this figure, $e_3$ and $e_0$) where $e_j$ is the left-most end on the upper boundary of the strip and $e_k$ is the right-most end on the lower boundary of the strip. Since these indices are the same as the indices for the two landing separatrices on the upper left and lower right boundary of the strip, they can never coincide with the indices for a homoclinic separatrix.}
    \label{transstrip}
 \end{figure}
%%%%
%-------------------------
\subsection{Combinatorial and Analytic Data}
One way to describe the topological structure of a vector field is by the union of homoclinic separatrices $s_{k,j}$ and distinguished transversals $T_{k,j}$. It was proved in \cite{KDcomb} that this description is equivalent to the one presented in the classification (from \cite{BD09}). Essentially, we want to use the numbers $\mathbb{Z}/(2d-2)$ to stand for indices of separatrices for homoclinics, and indices of ends for distinguished transversals otherwise.  The indices of transversals were chosen in a way to never conflict with the indices of homoclinic separatrices.
A \emph{combinatorial data set} can be described as a bracketing on the string $0 \ 1 \ 2\dots 2d-3$, where the elements paired by parentheses
correspond to the labels of the separatrices or distinguished transversals we want to pair. Round parentheses $(\cdots)$ are used to mark pairings corresponding to homoclinic separatrices, square parentheses $[\cdots]$ are used to mark pairings corresponding to a
distinguished transversal in each $\alpha \omega$-zone, and elements that are not paired correspond to the ends in sepal-zones (see Figure \ref{d4wsepalnhom} for some examples).\par
%---------------------
\begin{figure}
\resizebox{!}{4.2cm}{\input{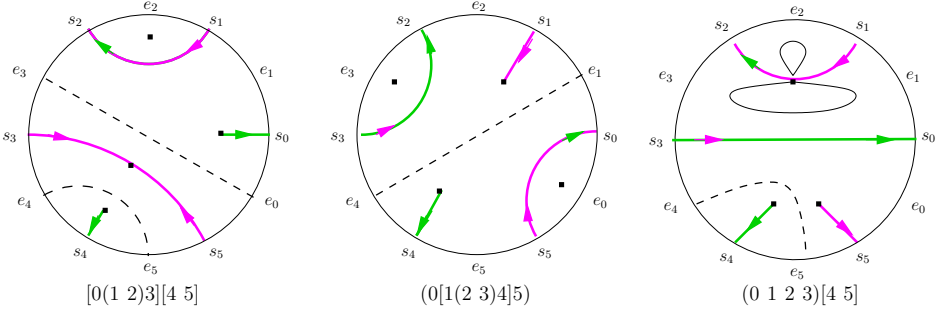}}
\caption{Disk models for three examples of vector fields of degree $d=4$ having
sepal zones or/and homoclinic separatrices.  The pairing of the ends is marked by
the dashed curves. The representation of the combinatorics in brackets is
displayed below each figure.}
\label{d4wsepalnhom}
\end{figure}
%---------------------
The \emph{analytic invariants} are an $(s+h)$-tuple in $\HH^s\times \mathbb{R}_+^h$ where to each homoclinic separatrix is assigned
a number $\tau=\int_{s_{k,j}}\frac{dz}{P(z)}>0$, and to each distinguished transversal is assigned a number $\alpha=\int_{T}\frac{dz}{P(z)}\in \HH$. \par
Putting the combinatorial and analytic data together, one can uniquely describe a vector field in $\Xi_d$ by a metric graph with a single vertex (corresponding to the pole at infinity), $h$ solid loops (corresponding to homoclinic separatrices), and $s$ dashed loops (corresponding to distinguished transversals). Each of the solid loops is assigned a positive real number and each dashed loop is assigned a complex number, corresponding to the analytic invariants (see Figure \ref{metricgraph}).
%---------------------
\begin{figure}
\centering
\Large
\resizebox{!}{3.5cm}{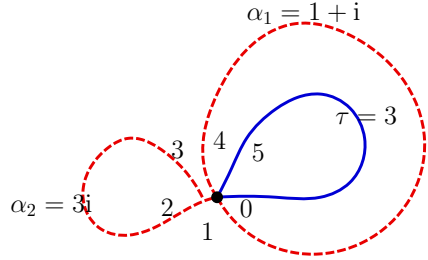}
\caption{Example of possible metric graph defining a complex polynomial vector field. The combinatorics can be described by the bracketing $(0 [1[2 \ 3]4]5)$ and the analytic invariants are the $(2+1)$-tuple $(  1+{\rm i},  3{\rm i},3 )\in \mathbb{H}_+^2 \times \R_+^1$.}
\label{metricgraph}
\end{figure}
%---------------------
Such a metric graph is a complete set of realizeable invariants for the classification of these vector fields (\cite{BD09}). Another interesting fact about this presentation is that each connected component of the plane minus this transversal flower contains exactly one equilibrium point.\par
We decompose parameter space into classes $\mathcal{C}$ of vector fields that have the same separatrix graph with the labeling. \par
Our goal is to understand bifurcations of the global trajectory structure, which means we need to understand changes in separatrix structure under small perturbation. In this paper, we will partially answer this question. That is, we will pick and arbitrary $\xi_{P_0}\in \Xi_d$, and try to answer which classes $\CC$ intersect every arbitrarily small neighborhood of $\xi_{P_0}$.
%----------------------
\begin{acknowledgements}
This research was supported by a grant from Idella Fonden and by the Marie Curie European Union Research Training Network {\it Conformal Structures and Dynamics} (CODY). Support for this project was also provided by a PSC-CUNY Award, jointly funded by The Professional Staff Congress and The City University of New York.
\end{acknowledgements}
%------------------------
\section{Topological and Analytic Structure of the Loci}
\label{analstrata}
The classification in \cite{BD09} gives a bijection between a combinatorial class $\mathcal{C}$ and $\HH^s \times \R_+^h$. The following theorem proves the type of bijection.
\begin{theorem}
\label{analstratastr}
There exists a real analytic isomorphism $G_{\mathcal{C}}:\HH^s \times \R_+^h\rightarrow \mathcal{C}$, which is $\C$-analytic in the first $s$ coordinates and $\R$-analytic in the last $h$ coordinates. It is the restriction of a holomorphic mapping in $(s+h)$ complex variables: $\tilde{G}_{\mathcal{C}}: \HH^s \times V_{\R_+}^h(\epsilon)\rightarrow \tilde{\mathcal{C}}$, where $\tilde{\mathcal{C}} \supset \mathcal{C}$.
\end{theorem}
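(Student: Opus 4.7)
The plan is to construct $G_{\CC}$ concretely as the inverse of the map sending a vector field to its analytic invariants, and then verify analyticity by tracking how the construction depends on $(\alpha,\tau) \in \HH^s \times \R_+^h$. Given such data, I would build an abstract translation surface by assembling the flat pieces prescribed by $\CC$---a half-plane for each sepal zone, a vertical half-strip for each center zone, and a horizontal strip for each $\alpha\omega$-zone---and gluing them along the intervals encoding homoclinic and landing separatrices, in the order dictated by the bracketing. The relevant ``gluing widths'' are precisely the $\tau_i \in \R_+$ (lengths of the boundary segments cut out by the homoclinic separatrices on the half-strips and strips) and the complex periods $\alpha_j \in \HH$ (determining the height and horizontal shift of each $\alpha\omega$-strip). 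A direct topological check using the bracketing data gives a sphere with $d$ marked points (the equilibrium points) and a flat metric having a cone point of total angle $2\pi(d-1)$ at a distinguished point playing the role of $\infty$.

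By uniformization this surface carries a unique conformal structure biholomorphic to $\hat{\C}$. Under such a biholomorphism the translation form $dw$ pulls back to a meromorphic form $dz/P(z)$ with a zero of order $d-2$ at $\infty$ and poles at the $d$ equilibrium points, and normalizing the biholomorphism to send $\infty$ to $\infty$ and require $P$ to be monic and centered uniquely pins down $P$, yielding $G_{\CC}(\alpha,\tau) = P\,\vf$. That this is a set-theoretic inverse of the invariant map is precisely the classification bijection of \cite{BD09}. Analyticity then reduces to analytic dependence of the uniformization on the gluing data: varying $\alpha_j$ holomorphically in $\HH$ moves the conformal structure holomorphically, while varying $\tau_i$ along $\R_+$ is merely a real-analytic motion, giving $\C$-analyticity of $G_{\CC}$ in $\alpha$ and $\R$-analyticity in $\tau$ respectively.

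For the complex extension, I would observe that the gluing construction of the first step makes literal sense for $\tau_i$ in a complex neighborhood $V_{\R_+}(\epsilon)$ of the positive reals: the identification of the two sides of a would-be homoclinic by a translation of complex amount $\tau$ still produces a translation surface, only now the identified curve is no longer horizontal and in general ceases to be a homoclinic separatrix of the resulting vector field. This yields a holomorphic $\tilde G_{\CC}: \HH^s \times V_{\R_+}^h(\epsilon) \to \tilde\CC$ whose restriction to $\HH^s \times \R_+^h$ is $G_{\CC}$. The main obstacle throughout is the analytic dependence of the uniformization step on parameters: one must verify that, having fixed a topological identification of the glued sphere once and for all, the resulting family of Beltrami coefficients is holomorphic in the parameters, so that the measurable Riemann mapping theorem with parameters (Ahlfors--Bers) produces a normalized uniformizing map---and hence coefficients of $P$---depending holomorphically on the parameters.
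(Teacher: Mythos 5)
Your proposal follows essentially the same route as the paper: assemble the flat pieces dictated by the combinatorics with $(\alpha,\tau)$ as gluing data, uniformize, normalize to a monic centered polynomial, get bijectivity from the classification in \cite{BD09}, and derive analyticity from holomorphic dependence of Beltrami coefficients in the Measurable Riemann Mapping Theorem, including the observation that the gluing still makes sense for $\tau$ in a complex neighborhood $V_{\R_+}(\epsilon)$. The step you flag as the main obstacle is exactly what the paper carries out in detail: it fixes a base surface with invariants $(\mathrm{i},\dots,\mathrm{i},1,\dots,1)$, realizes the deformations by explicit piecewise-affine maps $A_{\alpha}$, $A_{\tau}$ (making the Beltrami coefficients visibly holomorphic in the parameter with $\|\mu\|_{\infty}<1$), and reduces the several-variable statement to one variable at a time via Hartogs' theorem.
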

In particular, each $\mathcal{C}$ is naturally foliated by $\C$-analytic leaves of complex dimension $s$.
\begin{proof}
We prove, that $G_{\mathcal{C}}$  is a restriction of a holomorphic function in $s+h$ variables.
%%\framebox[1.1\width]{Is $\mathbb{R}$-analytic iff restriction of  $\mathbb{C}$-analytic?}
Let
\begin{equation}
V_{\R_+}(\epsilon)=\{z \mid \Re(z)>0, |\Im (z)|<\epsilon  \},
\end{equation}
for $\epsilon$ sufficiently small.
We prove the existence of a holomorphic function
\begin{align}
G: \HH^s \times V_{\R_+}^h(\epsilon) &\rightarrow \Xi_d \nonumber \\
\underline{\alpha} &\mapsto \xi_{\underline{\alpha} },
\end{align}
$\underline{\alpha}=(\alpha_1,\dots,\alpha_s,\tau_1,\dots,\tau_h)\in \HH^s \times V_{\R_+}^h(\epsilon)$. \par
By Hartog's Theorem, it is enough to show that $G$ is holomorphic in each $\alpha$ and $\tau$ in the single-variable sense (we drop the indices on the $\alpha$ and $\tau$ to simplify notation) in order to conclude $G$ is holomorphic in the $(s+h)$-variable sense. We will construct  families of surfaces $\bar{\mathcal{M}}_{\alpha}$, $\bar{\mathcal{M}}_{\tau}$ and maps $G_{\alpha}$, $G_{\tau}$ such that
\begin{align}
G_{\alpha}&:\bar{\mathcal{M}}_{\alpha}\rightarrow \bar{\C}, \quad \left( G_{\alpha} \right)_{\ast}\left(\vf\right)=\xi_{\alpha}, \quad \xi_{\alpha} \in \Xi_d,\\
 G_{\tau}&: \bar{\mathcal{M}}_{\tau} \rightarrow \bar{\C}, \quad \left( G_{\tau} \right)_{\ast}\left(\vf\right)=\xi_{\tau},\quad \xi_{\tau} \in \Xi_d,
\end{align}
and we will prove that each family is holomorphic in the one complex variable $\alpha$ or $\tau$ by utilizing holomorphic dependence of parameters in the Measurable Riemann Mapping Theorem (\cite{AB1960}). \par
We first define the \emph{rectified surface} $\mathcal{M}_0(\mathcal{C})$ associated to the vector field $\xi_0 \in \mathcal{C}$ and with analytic invariant the $(s+h)$-tuple $\underline{\alpha}_0=(\alpha^0_1,\dots,\alpha^0_s,\tau^0_1,\dots,\tau^0_h)$. Without loss of generality, we can take $\underline{\alpha}_0=(\rm{i},\dots,\rm{i},1,\dots,1)$ to simplify presentation. This combinatorial class has a number of rectified zones $Z$ with analytic invariants $\underline{\alpha}_0$ (see the left side of Figure \ref{affinemaps}).
  %%%%
   \begin{figure}%
     \centering
     \resizebox{!}{8cm}{\input{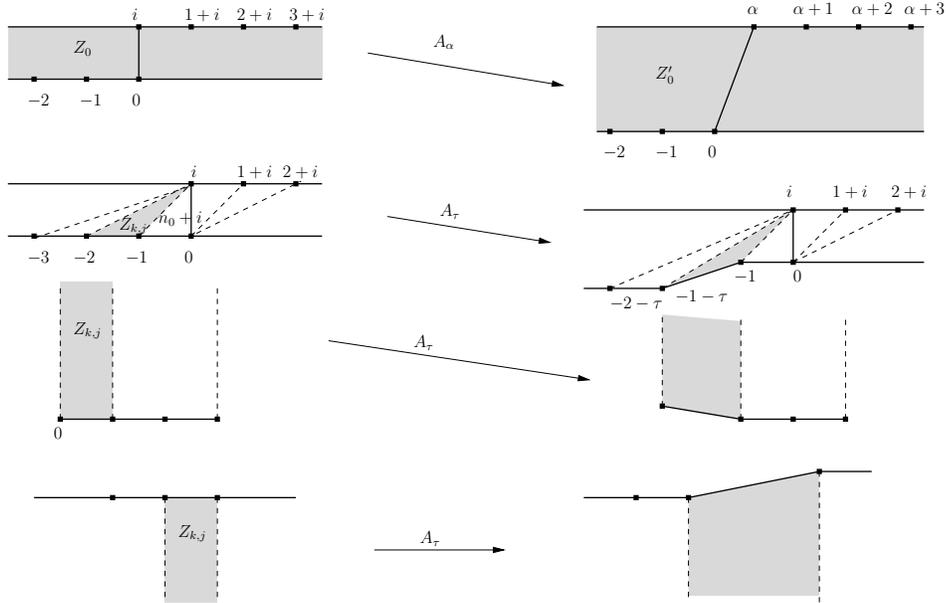}}
     \caption{Some examples of canonical rectified zones (left) and their images under $A_{\alpha}$ or $A_{\tau}$, the distorted rectified zones (right). }
    \label{affinemaps}
  \end{figure}
  %%%%%%
  Each separatrix has exactly two representations on the boundary of the rectified zones: one on the upper boundary of a rectified zone and one representation on the lower boundary of a (possibly the same) rectified zone. There are also several representations of $\infty$ on the boundaries of the rectified zones, called the \emph{ends}. Let
\begin{equation}
\mathcal{M}^{\ast}_0(\mathcal{C}):= \left( \bigsqcup Z \right)/\sim,
\end{equation}
where $\sim$ is the appropriate identification of the two representations of each separatrix and the identification of all ends. We let $\mathcal{M}_0(\mathcal{C}) \simeq \bar{\C}$ be the compactification (for details, see \cite{BD09}).\par
We now define \emph{distorted rectified surfaces} $\mathcal{M}_{\alpha}(\mathcal{C})$ and $\mathcal{M}_{\tau}(\mathcal{C})$ respectively by the following.
We consider first the case where we allow one $\alpha_0=\rm{i}$ to vary. Choose the strip $Z_{0}$ associated to $\alpha_0$. Choose a complex number $\alpha \in \HH$. We define a piecewise affine mapping $A_{\alpha}$, $\alpha \in \HH$, on the rectified zones $Z$ as follows. Let $A_{\alpha}$ be the piecewise affine mapping which is the identity on all  rectified zones $Z\neq Z_{0}$, and on $Z_{0}$, it is defined by $\rm{i} \mapsto \alpha$ and $1 \mapsto 1$. Then on $Z_{0}$
\begin{equation}
A_{\alpha}(z)=\frac{1}{2}(1-{\rm i} \alpha)z +\frac{1}{2}(1+{\rm i} \alpha)\bar{z} ,
\end{equation}
%\begin{equation}
%A_{\alpha}(z)=\frac{1}{2 \Im (\alpha_0)}({\rm i} \bar{\alpha}_0-{\rm i}\alpha)z +\frac{1}{2 \Im (\alpha_0)}(-{\rm i}\alpha_0+{\rm i}\alpha)\bar{z} .
%\end{equation}
 The mapping $A_{\alpha}$ maps $Z_{0}$ onto the \emph{distorted rectified zone} $Z_{0}':=A_{\alpha}(Z_{0}) $. As before, we define $\mathcal{M}_{\alpha}(\mathcal{C})$ as the compactification of
 \begin{equation}
\mathcal{M}^{\ast}_{\alpha}(\mathcal{C}):=\left( Z'_{0}\sqcup\bigsqcup \limits_{Z\neq Z_{0}}Z\right)/\sim.
 \end{equation}
The argument is similar but slightly more complicated for $\mathcal{M}_{\tau}(\mathcal{C})$, where we allow exactly one $\tau_0 \in \mathbb{R}_+$ to vary. A homoclinic separatrix $s_{k,j}$ is on the boundary of exactly two zones, so we have two rectified zones $Z_1$ and $Z_2$ with the rectified homoclinic separatrix $s_{k,j}$ with length $\tau_0 =1$ on their boundaries that will be distorted when we allow $\tau_0$ to vary holomorphically (see the right-hand side of Figure \ref{affinemaps}). These two zones can be a combination of strips, half-planes, and vertical half-strips (cylinders). For an $s_{k,j}$ on the boundary of either an upper (respectively lower) half-plane or vertical half-strip, let $Z_{k,j}$ be the  vertical half-strip of width $\tau_0=1$, such that $s_{k,j}$ is on the boundary. We let $A_{\tau}$, $\tau \in V_{\R_+}(\epsilon)$,  be the piecewise affine map that is the identity on all  rectified zones $Z\neq Z_1$ or $Z_2$ and the identity (perhaps with some translation) on $Z_1 \setminus Z_{k,j} $ or $Z_2 \setminus Z_{k,j}$, and on $Z_{k,j}$, it is defined by $1 \mapsto \tau\in V_{\R_+}$ and $\pm {\rm i}  \mapsto \pm {\rm i}$. This affine map takes the form
\begin{equation}
A_{\tau}(z)=\frac{1}{2}(\tau+1)z + \frac{1}{2}(\tau-1)\bar{z}.
\end{equation}
If $s_{k,j}$ is on the lower boundary of a strip, then we distort the strip by a (three-piece) piecewise mapping by the construction below. Details are included for completeness, but the idea is much easier to understand by consulting Figures \ref{stripdeform1} and \ref{stripdeform2}.
%%%%
  \begin{figure}%
    \centering
    \resizebox{!}{5cm}{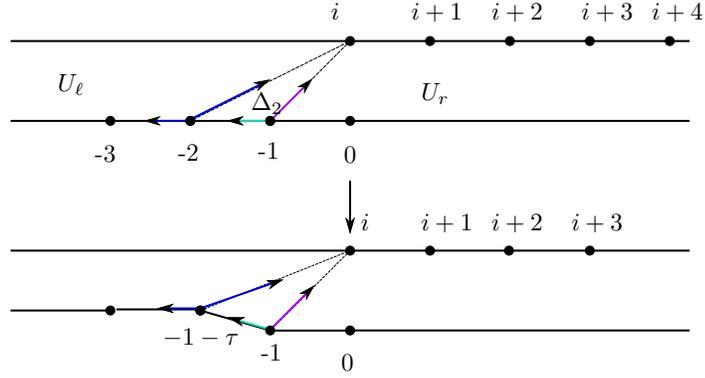}
    \caption{The triangle $\Delta_2$ has vertices $\rm{i}$, $-2$, and $-1$ on the boundary of the strip. Let $U_{\ell}$ be the part of the strip to the left of $\Delta_2$ and $U_r$ to the right. If we distort some $\tau_0=1$ on the lower edge of $\Delta_2$, then on $U_{\ell}$, the affine map $A_{\tau}$ is defined by $-1 \mapsto -1$ and $2+{\rm{i}}\mapsto \tau + 1 +{\rm{i}}$. On $\Delta_2$, the affine map is defined by $-1\mapsto -\tau$ and $1+\rm{i}\mapsto 1+\rm{i}$. On $U_r$, $A_{\tau}$ is the identity.}
    \label{stripdeform1}
 \end{figure}
%%%%
%%%%
  \begin{figure}%
    \centering
    \resizebox{!}{5cm}{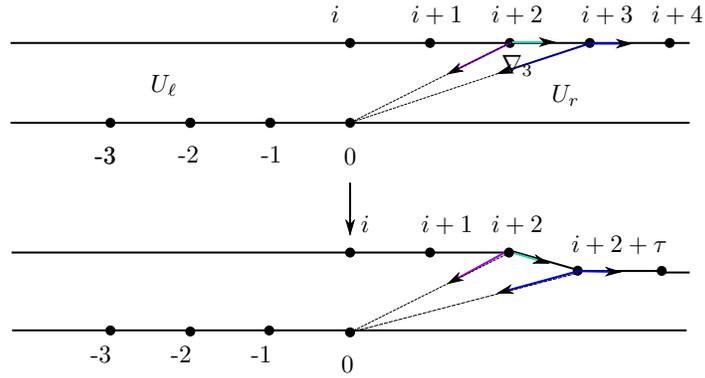}
    \caption{The triangle $\nabla_3$ has vertices $0$, ${\rm{i}}+2$, and ${\rm{i}}+3$ on the boundary of the strip. Let $U_{\ell}$ be the part of the strip to the left of $\nabla_3$ and $U_r$ to the right. If we distort some $\tau_0=1$ on the upper edge of $\nabla_3$, then on $U_{\ell}$, the affine map $A_{\tau}$ is the identity.  On $\nabla_3$, the affine map is defined by $1\mapsto \tau$ and $-2-\rm{i}\mapsto -2-\rm{i}$. On $U_r$, $A_{\tau}$ is defined by $1 \mapsto 1$ and $-3-{\rm{i}}\mapsto -\tau -2-{\rm{i}}$.}
    \label{stripdeform2}
 \end{figure}
%%%%
Let $\Delta_j$ be the triangle in the strip with vertices $\rm{i}$, $-j$, and $-j+1$ on the boundary of the strip. One edge of $\Delta_j$ is on the lower boundary of the strip. Let $\nabla_j$ be the triangle in the strip with vertices $0$, ${\rm{i}}+(j-1)$, and ${\rm{i}}+j$. One edge of $\nabla_j$ is on the upper boundary of the strip. In either case, let $U_{\ell}$ be the part of the strip to the left of either $\Delta_j$ or $\nabla_j$, and $U_r$ to the right. If we distort some $\tau_0=1$ on the lower edge of some $\Delta_j$, then on $U_{\ell}$, the affine map $A_{\tau}$ is defined by $-1 \mapsto -1$ and $j+{\rm{i}}\mapsto \tau + j-1 +{\rm{i}}$. This corresponds to the affine map
\begin{equation}
A_{\tau}(z)=\frac{1}{2}(2+{\rm{i}}-{\rm{i}}\tau)z+\frac{1}{2}(-{\rm{i}}+ {\rm{ i}} \tau)\bar{z}.
\end{equation}
On $\Delta_j$, the affine map is defined by $-1\mapsto -\tau$ and $j-1+\rm{i}\mapsto j-1+\rm{i}$. This corresponds to the affine map
\begin{equation}
A_{\tau}(z)=\frac{1}{2}(\tau +1 +{\rm{i}}(j-1)[\tau-1])z+\frac{1}{2}(\tau -1 -{\rm{i}}(j-1)[\tau-1])\bar{z}.
\end{equation}
On $U_r$, $A_{\tau}$ is the identity. The construction is similar for $\nabla_j$.\par
The mapping $A_{\tau}$ sends  $Z_1$ and $Z_2$ to the \emph{distorted rectified zones} $Z'_1:=A_{\tau}(Z_1)$ and $Z'_2:=A_{\tau}(Z_2)$. As before, we define $\mathcal{M}_{\tau}(\mathcal{C})$ as the compactification of
 \begin{equation}
\mathcal{M}^{\ast}_{\tau}(\mathcal{C}):= \left((Z'_1 \sqcup Z'_2)\sqcup\bigsqcup \limits_{Z \neq Z_1, Z_2}Z\right) /\sim.
 \end{equation}
 The exact expressions defining $A_{\alpha}$ and $A_{\tau}$ are not so important.  What is important is that the associated Beltrami coefficients $\mu_{\alpha}$ and $\mu_{\tau}$ are holomorphic in $\alpha$ and $\tau$ respectively and satisfy $\| \mu_{\alpha}\|_{\infty}<1$, $\| \mu_{\tau}\|_{\infty}<1$.  \par
 We endow $\mathcal{M}_{\alpha}$ with the standard complex structure $\sigma_0$ and the vector field $\vf$. We pullback by $A_{\alpha}$, giving us a new complex structure $\sigma_{\alpha}$ in $\mathcal{M}_0$ that depends holomorphically on $\alpha$. The rectifying coordinates extend %by Morera's theorem
  to a mapping $\phi:\C \rightarrow \left(\bigsqcup \limits_Z\bar{Z}\right)/\sim$. Under pullback, we induce a new almost complex structure $\tilde{\sigma}_{\alpha}$ in $\C$, analytic in $\alpha$, and a vector field $(\phi \circ A_{\alpha})^{\ast}\left(\vf\right)$ in $\C$. \par
 Let $\zeta^0_i$ be the \eqpt s of $(\phi \circ A_{\alpha})^{\ast}\left(\vf\right)$. By the Measurable Riemann Mapping Theorem (MRMT), there exists a family of quasiconformal maps $f_{\alpha}: \C \rightarrow \C$, normalized such that
\begin{align}
&f_{\alpha}(\infty)=\infty,\\
&\sum\limits_{i}f_{\alpha}(\zeta^0_i)=0,\quad \text{and}\\
&f_{\alpha}(s_0) \ \text{is asymptotic to} \ \R_+,
\end{align}
 such that $(f_{\alpha})^{\ast}\sigma_0=\tilde{\sigma}_{\alpha}$. The mapping $G_{\alpha}=(f_{\alpha}\circ \phi^{-1} \circ A_{\alpha}^{-1})$ is holomorphic in $z$, and by MRMT, $f_{\alpha}$ is holomorphic in $\alpha$.  Then $(G_{\alpha})_{\ast}\left(\vf\right)=P_{\alpha}(z) \vf$, where $P_{\alpha}$ is holomorphic in $\C$. The above is summarized in the diagram
\[
\begin{CD}
\left(\mathcal{M}_{\alpha} ,\sigma_0,\vf\right) @<A_{\alpha} <<\left(\mathcal{M}_0,\sigma_{\alpha},A_{\alpha}^{\ast}(\vf)\right)\\
@VV G_{\alpha} V @AA \phi A\\
(\C, \sigma_0,P_{\alpha}\vf) @<f_{\alpha} << (\C,\tilde{\sigma}_{\alpha},(\phi \circ A_{\alpha})^{\ast}(\vf)).
\end{CD}
\]
The index of the vector field at infinity is $-(d-2)$ (look about the \emph{ends} in rectifying coordinates), so infinity must be the only pole of order $d-2$ for the vector field. We can conclude that $P$ is a degree $d$ polynomial, which by the above normalizations is monic and centered.
So for fixed $\alpha$, $P_{\alpha}$ takes the form $P_{\alpha}(z)= \prod \limits_{i=1}^d(z-\zeta_i)$, $\zeta_i=f_{\alpha}(\zeta^0_i)$.  We need to show that $P_{\alpha}$ is holomorphic in $\alpha$, and it is enough to show that the $\zeta_i$  are analytic functions of $\alpha$. We can conclude that the roots $\zeta_i$ are analytic functions of $\alpha$ since $f_{\alpha}$ is holomorphic in $\alpha$ for fixed $z$. \par
Therefore, $G$ is holomorphic in each $\alpha,\ \tau$ and is hence holomorphic in $(s+h)$ complex variables. Therefore, $G$ is an open mapping. The restriction $G_{\mathcal{C}}:\HH^s \times \R_+^h\rightarrow \mathcal{C}$ is an open mapping and is furthermore bijective by the classification in \cite{BD09}. Hence $G_{\mathcal{C}}$ is an isomorphism which is $\C$-analytic in the first $s$ coordinates, and $\R$-analytic in the last $h$ coordinates.
\end{proof}
\begin{corollary}[Corollary of Theorem \ref{analstratastr}]
\label{connectedstrata}
Each $\mathcal{C}$ is connected. The (real) dimension of each $\mathcal{C}$ is $\dim_{\R}(\mathcal{C})=2s+h$, and the codimension (with respect to $\Xi_d$) is $\codim_{\R}(\mathcal{C})=2(d-1)-(2s+h)=2m^{\ast}+h$.
\end{corollary}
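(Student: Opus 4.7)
The corollary is a direct consequence of Theorem \ref{analstratastr}.

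\emph{Connectedness and dimension.} The space $\HH^s \times \R_+^h$ is connected, being a product of connected factors, and has real dimension $2s+h$. Since $G_{\mathcal{C}}$ is a real analytic isomorphism, it transports connectedness to $\mathcal{C}$ and preserves real dimension, so $\dim_{\R}\mathcal{C} = 2s+h$. Combining with $\dim_{\R}\Xi_d = 2(d-1)$ gives $\codim_{\R}\mathcal{C} = 2(d-1)-(2s+h)$ immediately.

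\emph{The identity $2(d-1)-(2s+h) = 2m^{\ast}+h$.} Writing $m^{\ast} = d-n$, where $n$ is the number of distinct equilibrium points of $P$, the identity reduces to $n = s+h+1$. I would prove this equality by applying Euler's formula to the metric graph (the transversal flower) embedded in $\hat{\mathbb{C}} \simeq S^2$: it has one vertex (the point at infinity) and $s+h$ loops based there, namely the $s$ distinguished transversals and the $h$ homoclinic separatrices. For such a bouquet on the sphere, $V-E+F = 2$ yields $F = s+h+1$. By the fact recalled just before the corollary, each complementary region of the flower contains exactly one equilibrium point, so $n = F = s+h+1$, as required.

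\emph{Expected obstacle.} The only delicate point in an otherwise formal argument is justifying that the transversal flower provides a genuine CW decomposition of $\hat{\mathbb{C}}$, i.e., that every complementary region is a topological $2$-cell rather than, say, an annulus, so that Euler's formula can legitimately be used to count the $F$ regions as $s+h+1$. This should fall out of the rectifying-coordinate description in the previous subsection: each zone of the vector field is realized by $\Phi$ as a half-plane, half-strip, or horizontal strip, hence is a topological disk, and the flower's complementary regions are assembled from these zones by gluing along landing separatrices (and at equilibria, across the distinguished transversals) in a manner that preserves simple connectivity.
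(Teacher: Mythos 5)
Your proof is correct, and it actually does more work than the paper, which states this corollary with no proof at all: connectedness and the dimension count are treated as immediate consequences of the isomorphism $G_{\mathcal{C}}:\HH^s\times\R_+^h\to\mathcal{C}$ of Theorem \ref{analstratastr} (exactly your first paragraph), and the identity $2(d-1)-(2s+h)=2m^{\ast}+h$ is asserted without argument, with $m^{\ast}$ (the multiplicity excess $d-n$) inherited from the cited classification papers. Your Euler-characteristic argument for $n=s+h+1$ is a genuine addition: you take the metric graph (one vertex at infinity, $s+h$ loops), get $F=s+h+1$ faces from $V-E+F=2$, and then invoke the fact, stated in the paper just after Figure \ref{metricgraph}, that each complementary component of the transversal flower contains exactly one equilibrium point. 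This is a clean and correct route, and it makes explicit a combinatorial relation the paper silently imports. One remark on your ``expected obstacle'': it is not actually an obstacle. The flower is a \emph{connected} graph embedded in $S^2$ (all loops share the vertex at infinity), and for a connected embedded graph in the sphere every complementary face is automatically an open $2$-cell, so Euler's formula applies without any need to verify cell structure via the rectifying coordinates; that verification would only be needed if the graph were disconnected, in which case a face could be an annulus and the count would fail.
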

\begin{remark}
By Corollary \ref{connectedstrata} and the enumeration of combinatorial classes in \cite{KDcomb}, we know exactly how many loci there are altogether and how many loci there are of a particular dimension.
\end{remark}
\subsection{Cone Structure of Loci}
\label{conestrsec}
Each combinatorial class $\mathcal{C}\cong \HH^s \times \R_+^h$ is an $\R_+$ cone with $z^d \vf \in \Xi_d$ as base point.
We need the following proposition stated in Pilgrim \cite{KPdessin}.
\begin{proposition}
\label{cone}
Let $P(z)=\prod \limits_{j=1}^d(z-\zeta_j)$ and $\mathcal{C} \ni \xi_P$. For every $c >0$, $\xi_{\tilde{P}}\in \mathcal{C}$ for $\tilde{P}(z)=\prod \limits_{j=1}^d(z-c\zeta_j) $.
\end{proposition}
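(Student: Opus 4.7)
The plan is to realize $\xi_{\tilde P}$ as the pushforward of $\xi_P$ (up to a positive real time rescaling) under the biholomorphism $\psi(z)=cz$ of $\hat{\mathbb{C}}$, and then invoke the fact that rescaling time by a positive real constant does not change the trajectories (as oriented unparameterized curves) nor their asymptotic angles at infinity, so the separatrix graph with its labeling is preserved.

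First I would check that $\xi_{\tilde P}\in\Xi_d$: the polynomial $\tilde P(w)=\prod_{j=1}^d(w-c\zeta_j)$ is manifestly monic, and since $\xi_P\in\Xi_d$ we have $\sum_j\zeta_j=0$, so $\sum_j c\zeta_j=0$ and $\tilde P$ is also centered. Next, a direct computation in the coordinate $w=cz$, using $\frac{d}{dz}=c\frac{d}{dw}$, gives
\begin{equation}
\psi_{\ast}(\xi_P)=cP(w/c)\,\vfw=c^{1-d}\prod_{j=1}^d(w-c\zeta_j)\,\vfw=c^{1-d}\,\xi_{\tilde P}.
\end{equation}
Since $c>0$, the factor $c^{1-d}$ is a positive real number, hence $\psi_{\ast}(\xi_P)$ and $\xi_{\tilde P}$ have identical trajectories in $\mathbb{C}$ with the same time orientation; they differ only by a real affine reparametrization of time.

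Consequently $\psi$ carries the separatrix graph $\Gamma_P$ homeomorphically onto $\Gamma_{\tilde P}$, mapping equilibria to equilibria, incoming separatrices to incoming separatrices, and outgoing to outgoing. Because $\psi$ is a real dilation it fixes $\infty$ and acts trivially on the circle of directions at infinity; in particular it preserves each asymptotic angle $\frac{2\pi\ell}{2(d-1)}$, hence sends the trajectory $\gamma_\ell$ of $\xi_P$ to the trajectory $\gamma_\ell$ of $\xi_{\tilde P}$ and the end $e_\ell$ of $\xi_P$ to the end $e_\ell$ of $\xi_{\tilde P}$. This immediately identifies homoclinic separatrices and $\alpha\omega$-zones (and therefore their distinguished transversals, which are intrinsic to the vector field) with the same labels, so $\xi_P$ and $\xi_{\tilde P}$ have the same combinatorial data set.

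I do not anticipate a serious obstacle here; the only point that needs even a moment's care is the bookkeeping of labels, which is settled by the observation that $\psi$ is a positive real dilation and therefore acts as the identity on asymptotic directions at $\infty$. Everything else is a one-line change of variables plus the invariance of unparameterized trajectories under positive real time rescaling.
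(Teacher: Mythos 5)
Your proof is correct and takes essentially the same approach as the paper: the paper verifies directly that $\eta(t)=c\,\gamma(c^{d-1}t)$ solves $\dot z=\tilde P(z)$ whenever $\gamma$ solves $\dot z=P(z)$, which is exactly your identity $\psi_{\ast}(\xi_P)=c^{1-d}\xi_{\tilde P}$ expressed at the level of trajectories, with the same conclusion that a positive real time rescaling preserves oriented trajectories. Your extra bookkeeping (that $\tilde P$ is monic and centered, and that the dilation fixes the asymptotic directions and hence the labeling) is left implicit in the paper but is a welcome completion of the argument.
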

\begin{proof}
If $\gamma(t)$ is a real trajectory of the vector field given by $P(z)$, i.e. $\gamma'(t)=P(\gamma(t))$, then for every $c>0$, $\eta(t)=c \gamma(c^{d-1}t)$ is a real trajectory of the vector field given by $\tilde{P}(z)$. Indeed, $\eta'(t)=c^d\gamma'(c^{d-1}t)=c^dP(c^{d-1}t)=\tilde{P}(c \gamma(c^{d-1}t))=\tilde{P}(\eta(t))$. Since $c>0$, the trajectories $\eta(t)$ are reparameterizations by time of the $\gamma(t)$, preserving orientation.
\end{proof}
\begin{corollary}
\label{minstratumadj}
The minimal stratum $\underline{0} \in \C^{d-1}$ corresponding to the vector field $z^d \vf$ is adjacent to all other loci.
\end{corollary}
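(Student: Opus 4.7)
The plan is to deduce this directly from Proposition \ref{cone} together with a limit computation in coefficient space. Fix an arbitrary combinatorial class $\mathcal{C}$ and choose any $\xi_P \in \mathcal{C}$ with $P(z) = \prod_{j=1}^d (z - \zeta_j)$. By Proposition \ref{cone}, the scaled polynomials $\tilde{P}_c(z) = \prod_{j=1}^d (z - c\zeta_j)$ satisfy $\xi_{\tilde{P}_c} \in \mathcal{C}$ for every $c > 0$, giving a one-parameter family inside $\mathcal{C}$ parametrized by the positive reals.

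Next I would let $c \to 0^+$ and track what happens to the coefficients of $\tilde{P}_c$ in $\Xi_d \simeq \C^{d-1}$. Since $\tilde{P}_c$ is obtained from $P$ by replacing each root $\zeta_j$ with $c\zeta_j$, the coefficient of $z^{d-i}$ in $\tilde{P}_c$ is exactly $c^i$ times the coefficient of $z^{d-i}$ in $P$. In particular, monicness is preserved and the centering condition $\sum c\zeta_j = c \sum \zeta_j = 0$ is preserved, so $\tilde{P}_c$ remains of the required form; moreover every non-leading coefficient tends to $0$ as $c \to 0^+$. Therefore $\xi_{\tilde{P}_c} \to z^d \tfrac{d}{dz}$ in $\Xi_d$ as $c \to 0^+$, which means the parameter $\underline{0} \in \C^{d-1}$ lies in the closure of $\mathcal{C}$.

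Since $\mathcal{C}$ was an arbitrary combinatorial class, $\underline{0}$ is adjacent to every $\mathcal{C}$, as claimed. There is essentially no obstacle here: Proposition \ref{cone} provides a canonical curve inside $\mathcal{C}$ accumulating at the base point, and the only small verification needed is that the rescaling by $c$ preserves the monic-centered normalization of $\Xi_d$, which is immediate.
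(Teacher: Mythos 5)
Your proof is correct and follows essentially the same route as the paper: the paper's own argument is precisely to invoke Proposition \ref{cone}, note that $\tilde{P}$ depends continuously on $c$, and let $c \to 0$. Your version merely makes explicit the coefficient computation (each non-leading coefficient scales by $c^i$, preserving the monic-centered normalization), which is a fine elaboration of the same idea.
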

\begin{proof}
We use Lemma \ref{cone}, note that $\tilde{P}$ is continuous in $c$, and let $c \rightarrow 0$.
\end{proof}
This cone structure is also reflected in the analytic invariants for a class. We note what happens to the analytic invariants when roots of the polynomial are multiplied by the constant $c$.
\begin{proposition}
\label{analinvsc}
The analytic invariants for $\tilde{P}$ are equal to $1/c^{d-1}$ times the analytic invariants for $P$.
\end{proposition}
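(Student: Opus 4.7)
The plan is to reduce the integrals defining the analytic invariants of $\tilde{P}$ to those of $P$ by the substitution $w = cz$, and read off the factor $1/c^{d-1}$ from the degree of $\tilde{P}$.

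First I would observe the scaling identity
\begin{equation}
\tilde{P}(w)=\prod_{j=1}^{d}(w-c\zeta_j)=c^{d}\prod_{j=1}^{d}\!\left(\tfrac{w}{c}-\zeta_j\right)=c^{d}\,P\!\left(\tfrac{w}{c}\right).\nonumber
\end{equation}
Combined with the explicit reparametrization $\eta(t)=c\gamma(c^{d-1}t)$ from the proof of Proposition \ref{cone}, this shows that the separatrix graph of $\xi_{\tilde{P}}$ is the image of $\Gamma_P$ under the dilation $z\mapsto cz$. In particular, each homoclinic separatrix $\tilde{s}_{k,j}$ of $\xi_{\tilde{P}}$ is $c\cdot s_{k,j}$, and each equilibrium point and end at infinity is transported correspondingly. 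So the combinatorial data is preserved (consistent with Proposition \ref{cone}), and each zone of $\xi_{\tilde{P}}$ is the image of the corresponding zone of $\xi_P$ under $z\mapsto cz$.

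Next I would check that distinguished transversals also transport under $z \mapsto cz$. The metric $\frac{|dw|}{|\tilde{P}(w)|}$ pulls back under $w=cz$ to
\begin{equation}
\frac{|dw|}{|\tilde{P}(w)|}=\frac{c\,|dz|}{c^{d}|P(z)|}=\frac{1}{c^{d-1}}\cdot\frac{|dz|}{|P(z)|},\nonumber
\end{equation}
i.e.\ a positive constant multiple of the original metric. Since geodesics are preserved under a constant conformal rescaling, the distinguished transversal $\tilde{T}_{k,j}$ of $\xi_{\tilde{P}}$ (characterized in the Definition preceding the statement as the geodesic joining the appropriate ends, avoiding equilibrium points and separatrices) is exactly $c\cdot T_{k,j}$.

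With these geometric identifications in place, the computation is immediate: for each homoclinic separatrix,
\begin{equation}
\tilde{\tau}_{k,j}=\int_{\tilde{s}_{k,j}}\!\frac{dw}{\tilde{P}(w)}=\int_{s_{k,j}}\!\frac{c\,dz}{c^{d}P(z)}=\frac{1}{c^{d-1}}\,\tau_{k,j},\nonumber
\end{equation}
and analogously $\tilde{\alpha}=\alpha/c^{d-1}$ for each distinguished transversal. Since there is no genuine analytic difficulty, the only point that requires care is the justification that the distinguished transversals are mapped to distinguished transversals, which is handled by the conformal rescaling of the metric above.
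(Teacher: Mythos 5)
Your proof is correct, and it reaches the factor $1/c^{d-1}$ by the same underlying mechanism as the paper: the dilation $w=cz$ together with the scaling identity $\tilde{P}(w)=c^{d}P(w/c)$. The difference is in how the two arguments justify that the invariants of $\tilde{P}$ may be compared with those of $P$ at all. The paper disposes of this in one line by citing Brickman--Thomas: the residues $\Res(1/P,\zeta)$, and hence the analytic invariants, are conformal invariants, so $\tilde{P}\vf$ has the same invariants as $c^{d-1}P\vf$, and the integral $\int_{\gamma}\frac{dz}{c^{d-1}P(z)}=\frac{1}{c^{d-1}}\alpha$ is then computed over the \emph{same} curves with a rescaled integrand. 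You instead verify the transport of structure by hand: the reparametrization $\eta(t)=c\gamma(c^{d-1}t)$ from Proposition \ref{cone} carries the separatrix graph (with labels, since $c>0$ preserves asymptotic directions) to that of $\tilde{P}$, and the pullback computation $\frac{|dw|}{|\tilde{P}(w)|}=\frac{1}{c^{d-1}}\frac{|dz|}{|P(z)|}$ shows geodesics go to geodesics, so distinguished transversals go to distinguished transversals; then you change variables in the integral. Your route is more self-contained --- it needs no external citation and makes explicit exactly which curves are being integrated over on the $\tilde{P}$ side --- at the cost of the extra verification step for the transversals, which is precisely the point you correctly identified as the only one needing care. (One could even shortcut that step: since $\frac{dz}{P(z)}$ is holomorphic off the zeros, the integral only depends on the homotopy class of the transversal, so exact geodesic-to-geodesic correspondence is more than is strictly needed.) Both arguments are sound; the paper's is shorter by citation, yours is elementary.
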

\begin{proof}
The $\Res(1/P,\zeta)$ are conformal invariants (Brickman and Thomas \cite{BT76}), so the analytic invariants are too. Therefore, $\tilde{P}\vf$ has the same analytic invariants as $c^{d-1}P \vf$, and
\begin{equation}
\tilde{\alpha}=\int_{\gamma}\frac{dz}{c^{d-1}P(z)}=\frac{1}{c^{d-1}}\alpha,
\end{equation}
where $\tilde{\alpha}$ and $\alpha$ are the corresponding analytic invariants for $\tilde{P}\vf$ and $P\vf$ respectively.
\end{proof}
%---------------------
\section{Structural Stability and Bifurcations}
It is natural to consider the possible bifurcations for these vector fields. More specifically, we want to understand: given an arbitrary $\xi_0\in \Xi_d$, which combinatorial classes intersect every arbitrarily small neighborhood of $\xi_0$. So we need to consider changes in the separatrix structure for small perturbations of $\xi_0$.
\subsection{Structurally Stable Vector Fields}
\begin{proposition}
The structurally stable vector fields in $\Xi_d$ are the vector fields with neither multiple equilibrium points nor homoclinic separatrices.
\end{proposition}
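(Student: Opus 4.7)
The plan is to reduce structural stability to openness of the combinatorial class $\mathcal{C}$ containing $\xi_P$, via the codimension formula from Corollary~\ref{connectedstrata}:
\begin{equation}
\codim_{\R}(\mathcal{C}) = 2m^* + h,
\end{equation}
where $h$ counts the homoclinic separatrices of $\xi_P$ and $m^*$ is the total multiplicity excess of the equilibrium points (so $m^* = 0$ iff no equilibrium point is multiple; both $m^*$ and $h$ are non-negative).

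For the ``if'' direction, I would suppose $\xi_P$ has only simple equilibrium points and no homoclinic separatrices. Then $m^* = h = 0$, so $\codim_{\R}(\mathcal{C}) = 0$. By Theorem~\ref{analstratastr}, $\mathcal{C}$ is the image of $\HH^s \times \R_+^h$ under a real-analytic isomorphism $G_{\mathcal{C}}$ which is the restriction of a holomorphic, and hence open, map $\tilde{G}_{\mathcal{C}}$ on a complex neighborhood of the domain. Since $\dim_{\R}(\mathcal{C}) = 2(d-1) = \dim_{\R}(\Xi_d)$, this forces $\mathcal{C}$ to be open in $\Xi_d$. Every sufficiently small perturbation of $\xi_P$ thus lies in $\mathcal{C}$, has the same labeled separatrix graph, and therefore the same topological trajectory structure, so $\xi_P$ is structurally stable.

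For the ``only if'' direction, I would argue the contrapositive. If $\xi_P$ has either a multiple equilibrium point or a homoclinic separatrix, then $m^* \ge 1$ or $h \ge 1$, so $\codim_{\R}(\mathcal{C}) \ge 1$. Hence $\mathcal{C}$ is a proper submanifold of $\Xi_d$ of positive codimension and in particular has empty interior, so every neighborhood of $\xi_P$ meets a different class $\mathcal{C}' \ne \mathcal{C}$. Since the trajectories at infinity are asymptotic to the fixed angles $\frac{2\pi \ell}{2(d-1)}$, the labeling varies continuously with $P$, and by the introduction the labeled separatrix graph completely determines the topological structure of the trajectories. Therefore a vector field in $\mathcal{C}'$ is topologically inequivalent to $\xi_P$, and $\xi_P$ is not structurally stable.

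The main step requiring care is the last claim that ``different combinatorial class'' forces ``different topological trajectory structure'' for small perturbations; this is underwritten by the canonical, angle-based labeling of the separatrices together with the separatrix-graph invariant cited in the introduction. Everything else is a bookkeeping application of Corollary~\ref{connectedstrata} together with the openness supplied by the holomorphic extension in Theorem~\ref{analstratastr}.
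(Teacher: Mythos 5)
Your ``if'' direction is correct, and it is a genuinely different route from the paper's. You obtain openness of a full-dimensional class from Theorem \ref{analstratastr}: with $m^*=h=0$ the map $G_{\mathcal{C}}:\HH^{d-1}\to\Xi_d$ is injective, holomorphic, hence open (or simply invoke invariance of domain), so $\mathcal{C}$ is open and every nearby field has the same labeled separatrix graph, hence the same topological structure. The paper instead deduces this direction from Theorem \ref{landingstablethm}: since all equilibria are simple, no splitting can occur, so every landing separatrix persists under small perturbation and no homoclinics can appear, giving the same conclusion. Both arguments are sound; yours trades the dynamical stability theorem of the appendix for the parametrization theorem of Section \ref{analstrata}.

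The ``only if'' direction, however, has a genuine gap at exactly the step you flagged. From ``every neighborhood of $\xi_P$ meets a class $\mathcal{C}'\neq\mathcal{C}$'' you conclude that such a field is topologically inequivalent to $\xi_P$, citing the fact that the labeled separatrix graph determines the topological structure. That fact gives ``same labeled graph $\Rightarrow$ same topology''; you need the converse, and the converse is false. The labels are attached to asymptotic directions at infinity, which a topological equivalence has no obligation to respect: conjugating $\xi_P$ by $z\mapsto cz$ with $c^{d-1}=1$ yields a field in $\Xi_d$ that is conformally, hence topologically, equivalent to $\xi_P$, yet all indices are shifted by $2$ modulo $2d-2$, which in general is a different combinatorial class. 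Your appeal to continuity of the labeling does not close this hole, because structural stability only requires \emph{some} homeomorphism conjugating the nearby field to $\xi_P$, not one close to the identity; so ``nearby and in a different class'' does not by itself rule out equivalence.

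To repair it you must produce, in every neighborhood of $\xi_P$, a field that differs from $\xi_P$ in a quantity that genuinely is a topological invariant. Two such invariants suffice: the number of equilibrium points, and the number of homoclinic separatrices (these are exactly the trajectories unbounded in both time directions, so any trajectory-preserving homeomorphism preserves their count). Since there are finitely many classes and each class with $2m^*+h\geq 1$ is the injective continuous image of a manifold of positive codimension, the union of all such classes is meager; hence fields with $m^*=0$ and $h=0$ are dense. If $m^*(P)\geq 1$, such a nearby field has $d$ equilibria while $\xi_P$ has fewer; if $h(P)\geq 1$, it has no homoclinic separatrix while $\xi_P$ has at least one. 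Either way the nearby field is inequivalent to $\xi_P$, which is what the paper's brief appeal to the ``bifurcation locus'' (backed by the breaking of homoclinics via the construction of Theorem \ref{analstratastr}, Section \ref{bifsfxmult}) is really doing; the dimension count alone does not.
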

\begin{proof}
The vector fields without homoclinic separatrices or \multeq s are structurally stable, which follows immediately from Theorem \ref{landingstablethm}. By Theorem \ref{analstratastr}, the vector fields with either a homoclinic separatrix or \multeq \  form loci of dimension strictly less than the maximal dimension and must therefore belong to the bifurcation locus.
\end{proof}
\begin{corollary}
\label{strstbdense}
The structurally stable vector fields are dense in $\Xi_d$.
\end{corollary}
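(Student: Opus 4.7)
The plan is to deduce density directly from the codimension estimate in Corollary \ref{connectedstrata}. By the preceding Proposition, a vector field in $\Xi_d$ is structurally unstable if and only if it lies in a combinatorial class $\mathcal{C}$ with $2m^{*}+h\geq 1$. Writing
\[
B \;=\; \bigcup_{\mathcal{C}\,:\,2m^{*}+h\geq 1}\mathcal{C}
\]
for the bifurcation locus, my goal is to show that $B$ has empty interior in $\Xi_d\simeq\mathbb{C}^{d-1}$.

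First I would invoke the fact that the collection of combinatorial data sets is \emph{finite}: they are bracketings of the finite string $0\,1\,\ldots\,2d-3$, as recalled in the introduction and enumerated in \cite{KDcomb}. Hence $B$ is a finite union of classes. Second, by Corollary \ref{connectedstrata}, each $\mathcal{C}$ appearing in the union is a real-analytic submanifold of $\Xi_d\simeq\mathbb{R}^{2(d-1)}$ of real codimension at least one, and therefore has $2(d-1)$-dimensional Lebesgue measure zero. A finite union of Lebesgue-null subsets of $\mathbb{R}^{2(d-1)}$ is Lebesgue-null and therefore has empty interior. Consequently $\Xi_d\setminus B$ is open and dense, and this set is contained in the structurally stable locus by the Proposition.

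I do not anticipate any genuine obstacle here, since all the heavy lifting has already been carried out in Theorem \ref{analstratastr} and Corollary \ref{connectedstrata}. The only point to be careful about is the finiteness of the combinatorial classification, which is what converts the potentially delicate statement ``union of submanifolds of positive codimension has empty interior'' into the elementary statement that a finite union of null sets has empty interior; without that finiteness one would need a semi-analytic dimension argument or the Baire category theorem applied to the closures $\overline{\mathcal{C}}$.
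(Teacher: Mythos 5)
Your proposal is correct and is essentially the paper's own (implicit) argument: the paper states this corollary without proof, as an immediate consequence of the preceding Proposition together with Theorem \ref{analstratastr} and Corollary \ref{connectedstrata}, namely that the structurally unstable fields fill out finitely many classes of real codimension at least one, whose union therefore cannot contain an open set. Your measure-theoretic filling-in (finitely many combinatorial classes, each the real-analytic image of $\HH^s\times\R_+^h$ of dimension $2s+h<2(d-1)$, hence Lebesgue-null) is exactly the standard way to make this precise; the only stray remark is that openness of the stable locus does not follow from the measure argument, but openness is not needed for density.
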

%%--------------------
In general, bifurcations can be complicated when we allow multiple \eqpt s to split (see Section \ref{nocelldecomp} for an example). We therefore consider first the possible bifurcations when the multiplicities of the equilibrium points are preserved under small perturbation.
Theorem \ref{landingstablethm} in the appendix of this paper proves that a landing separatrix cannot be lost under small perturbation when preserving multiplicity, so the non-splitting bifurcations must be those involving only breakings of one or more homoclinic separatrices.
%----------------------------------
\subsection{Some Non-splitting Bifurcations}
\label{bifsfxmult}
The construction in the proof of Theorem \ref{analstratastr} in fact tells us more than is stated in the theorem. Since the only non-splitting bifurcations can involve breakings of homoclinic separatrices, all non-splitting bifurcations can be understood by analyzing the combinatorics of the deformed zones.
%% Is this clear since non-splitting bifs can only involve breaking of homos and V^h covers all possible homo splittings?
We describe in the following certain non-splitting bifurcations, and an exhaustive analysis of these is to be considered in a future paper.\par
We start by explaining what can happen if exactly one analytic invariant associated to a homoclinic separatrix is allowed to take values in $\pm \HH$, instead of being restricted to $\R_+$, while the rest of the analytic invariants are preserved. That is, we consider the possible bifurcations when exactly one homoclinic separatrix $s_{k,j}$ breaks. A homoclinic separatrix $s_{k,j}$ is on the boundary of exactly two zones. We consider the \emph{distorted zones} as the proof of Theorem \ref{analstratastr}, where we allow $\tau_0(s_{k,j})\mapsto \tau \in V_{\R_+}(\epsilon)\setminus \R_+$. We know that the distorted zones endowed with the vector field  $\vf$ correspond to some monic and centered polynomial vector fields in  a neighborhood of the given combinatorial class. When we allow  a single $\tau_0(s_{k,j})$ to vary holomorphically, then this causes the separatrices $s_k$ and $s_j$ to land.  If $\tau \in +\HH$, then instead of coming back into infinity (resp. outgoing from) infinity, the separatrix $s_k$ (resp. $s_j$) now lands at the \eqpt \ on the boundary of the zone having $s_{k,j}$ as part of its upper (resp. lower) boundary in rectifying coordinates.
If $\tau \in -\HH$,  then instead of coming back into (resp. outgoing from) infinity, the separatrix $s_k$ (resp. $s_j$) now lands at the \eqpt \ on the boundary of the zone having $s_{k,j}$ as part of its lower (resp. upper) boundary in rectifying coordinates.
The \eqpt \ at which $s_k$ (resp. $s_j$) lands is either a sink (resp. source) or \multeq , depending on whether the lower or upper (resp. upper or lower) rectified zone having $s_{k,j}$ on the boundary was a vertical half-strip or strip in the first case, or in the latter case, a half plane.  Notice that if $s_{k,j}$ is on the boundary of a vertical half-strip, then the associated center becomes either a sink or source. See Figure \ref{affinemaps2} for some examples.\par
% %%%%
   \begin{figure}%
     \centering
     \resizebox{!}{8cm}{\input{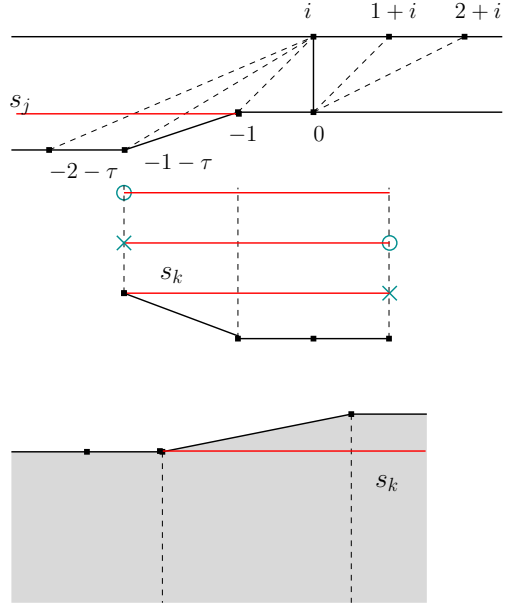}}
     %\resizebox{!}{8cm}{\input{affinemaps2.pstex_t}}
     \caption{Some examples of distorted zones endowed with the vector field $\vf$. The separatrices (drawn in red) are the trajectories going out of and coming into the ends, so for a distorted zone, these may not necessarily be on the boundary of the distorted zones. Each of the two separatrices which were a homoclinic separatrix for the non-distorted zones  enter opposite zones on which the homoclinic separatrix was part of the boundary. In the top picture, $s_j$ lands at either the source or \multeq \ to which the strip is associated. In the middle picture, $s_{k,j}$ belonged to the lower boundary of an upper half-strip, and after perturbation, $s_k$ now lands at the \eqpt \ which was on the boundary of the half-strip, making the center a sink (one should see the points marked by the blue crosses and circles in the figure as being identified). In the bottom picture, $s_k$ now lands at the \multeq \ which had $s_{k,j}$ on the uppper boundary of one of its associated half-planes.}
    \label{affinemaps2}
  \end{figure}
% %%%%
 If we  allow more than one analytic invariant associated to a homoclinic separatrix to vary at the same time, more complicated things can happen. In particular, new homoclinic separatrices can form. In order to understand this situation, we need to define $H$-chains.
These $H$-chains turn out to be the structures we need to understand exactly which homoclinic separatrices can form under small perturbation, so we define them here
\begin{definition}
An \emph{$H$-chain} of length $n$ is a sequence of $n$ consecutive homoclinic separatrices $\{ s_{k_i,j_i}\}$, $i=1,\dots,n$, i.e. homoclinic separatrices $s_{k_i,j_i}$ such that for each $i$, either $k_{i+1}=j_i+1$ (upper) or $k_{i+1}=j_i-1$ (lower). In particular, a sequence $s_{k_i,j_i}$ such that $k_{i+1}=j_i+1$ for all $i$ is called a \emph{clockwise $H$-chain}, and a sequence $s_{k_i,j_i}$ such that $k_{i+1}=j_i-1$ for all $i$ is called a \emph{counter-clockwise $H$-chain}.
\end{definition}
\begin{remark}
Note that any counter-clockwise $H$-chain is necessarily contained in the lower boundary of a single zone, and a clockwise $H$-chain is contained in the upper boundary of a single zone.
\end{remark}
\begin{definition}
A \emph{closed $H$-chain} of length $n$ is an $H$-chain in which $s_{k_{i+n},j_{i+n}}=s_{k_{i},j_{i}}$, for all $i=1,\dots,n$. An \emph{open $H$-chain} is one that is not closed.
\end{definition}
\begin{remark}
The separatrices in an open $H$-chain have a natural ordering, according to the ordering from left to right in the rectifying coordinates (direction of the flow). The separatrices in a closed $H$-chain do not have a well-defined ordering.
\end{remark}
We first explain the situation where $s_{k,j_0}$ and $s_{k_0,j}$ have a clockwise $H$-chain in common.  We number the $H$-chain with these separatrices at the edges: $s_{k,j_0}=s_{k_1,j_1}, \ s_{k_2,j_2}, \dots , s_{k_n,j_n}=s_{k_0,j}$. The separatrix $s_{k,j}$ forms under small perturbation if and only if all partial sums satisfy
$T_{m}:=\sum \limits_{i=1}^m \Im(\tau_i) >0$, for all $m=1,\dots n-1$ and $T_n=0$ (see Figure \ref{newhom}).\par
 % %%%%
   \begin{figure}%
     \centering
     \resizebox{!}{3cm}{\input{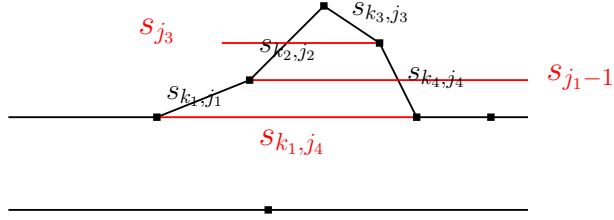}}
     \caption{An example of a distorted strip whose corresponding perturbed vector field has a homoclinic separatrix $s_{k_1,j_4}$ which was not there before perturbation. The separatrices $s_{j_3}$ and $s_{j_1-1}$ land at the \eqpt s in the other zones which had the homoclinic separatrices $s_{k_4,j_4}$ and $s_{k_2,j_2}$ on the upper or lower boundary before perturbation. From the figure, it seems we are distorting the $\tau(s_{k_i,j_i})$ by a non-trivial amount, but they should all be seen as having imaginary part distorted by some small $\epsilon_i$.}
    \label{newhom}
  \end{figure}
% %%%%
It is also possible for a homoclinic separatrix to form under small perturbation if the two initial homoclinics are on the boundary of different zones.
 \begin{proposition}
The separatrix $s_{k,j}$ can form under small perturbation if and only if $s_{k,j_0}$ and $s_{k_0,j}$ have an $H$-chain in common (belong to some $H$-chain), and for an open $H$-chain, $s_k$ is to the left of $s_j$.
\end{proposition}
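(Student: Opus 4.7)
The plan is to work in the rectifying coordinates of the distorted rectified surfaces $\M_\tau(\CC)$ constructed in the proof of Theorem \ref{analstratastr}. Recall that perturbing an analytic invariant $\tau_0(s_{k_i,j_i}) = 1$ to $\tau_i \in V_{\R_+}(\epsilon)$ with nonzero imaginary part amounts to a piecewise affine deformation of the rectified zones that shifts the two boundary representations of $s_{k_i,j_i}$ relative to each other by (approximately) $\mathrm{i}\,\Im(\tau_i)$. In the unperturbed picture, the separatrix $s_k$ terminates at $\infty$ along the boundary of a zone as part of the homoclinic $s_{k,j_0}$, and similarly $s_j$ is a boundary end on the homoclinic $s_{k_0,j}$. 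For $s_k$ and $s_j$ to merge into a single homoclinic $s_{k,j}$ after perturbation, the corresponding vertical trajectories in rectifying coordinates must be identified via the perturbed gluings along a sequence of zones.

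For necessity, I would trace the separatrix $s_k$ through the perturbed rectified surface starting from its end at infinity. Because only boundaries corresponding to broken homoclinics have been displaced, $s_k$ can pass from a zone to an adjacent one only across a homoclinic whose $\tau_i$ has been perturbed; moreover, to continue beyond such a crossing rather than land at an equilibrium point, the cumulative vertical shift must remain on the correct side. Thus the sequence of zones visited is glued along a chain of homoclinics $s_{k,j_0} = s_{k_1,j_1},\dots,s_{k_n,j_n} = s_{k_0,j}$ with consecutive indices satisfying $k_{i+1} = j_i + 1$ (upper / clockwise) or $k_{i+1} = j_i - 1$ (lower / counter-clockwise), which is precisely an $H$-chain. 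Closure of the trajectory at $s_j$ translates directly into the conditions $T_m = \sum_{i=1}^m \Im(\tau_i) > 0$ for $m < n$ (survival through each intermediate crossing) and $T_n = 0$ (exact matching at the last step).

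For sufficiency, given a common $H$-chain, I would construct explicit perturbation parameters $\tau_i = 1 + \mathrm{i}\epsilon_i$ with small $\epsilon_i$ satisfying $\sum_{i \le m} \epsilon_i > 0$ for $m < n$ and $\sum_{i=1}^n \epsilon_i = 0$ (the sufficiency in the clockwise case of Figure \ref{newhom}, with an analogous choice in the counter-clockwise case). Applying the construction of $\M_\tau(\CC)$ with these parameters and reading off the resulting vector field via the maps $A_\tau$, $\phi$, and $f_\tau$ of Theorem \ref{analstratastr}, the vertical trajectory in rectifying coordinates that begins at the end representing $s_k$ closes up with the one representing $s_j$, producing the desired homoclinic. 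The ordering condition in the open case ($s_k$ to the left of $s_j$) reflects the orientation of the flow $\vf$ in rectifying coordinates: $s_k$ is an incoming index and $s_j$ an outgoing one, so along the natural left-to-right order of the $H$-chain, $s_k$ must appear first for the flow to travel from $s_k$ to $s_j$ along the chain; for a closed $H$-chain there is no such global ordering and the condition is vacuous.

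The principal obstacle is bookkeeping: one must carefully identify which end of each perturbed $s_{k_i,j_i}$ the trajectory $s_k$ reaches after the $m$-th crossing, uniformly across the three possible zone types (strip, half-plane, vertical half-strip) on either side of the homoclinic, and verify that the partial-sum inequality is exactly the condition that keeps the trajectory inside the next zone rather than landing on an equilibrium or entering a third zone. A secondary step is to rule out, via the same tracing, any spurious ways the new homoclinic could form outside an $H$-chain: any alternative path would have to cross an unperturbed boundary, which forces the trajectory to remain a landing separatrix rather than closing up.
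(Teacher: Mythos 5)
Your overall strategy---tracing $s_k$ through the distorted rectified zones of Theorem \ref{analstratastr} and imposing partial-sum conditions on the $\Im(\tau_i)$---is the same as the paper's, and your handling of the ordering condition (choosing the chain so that $s_{k,j_0}$ sits to the left of $s_{k_0,j}$, with the condition vacuous for closed chains) also matches. However, there is a genuine gap: both your necessity and sufficiency steps assert the \emph{uniform} conditions $T_m=\sum_{i=1}^m\Im(\tau_i)>0$ for $m<n$ and $T_n=0$, with the counter-clockwise case dismissed as ``analogous.'' Those conditions are correct only for an $H$-chain that is entirely clockwise or entirely counter-clockwise, i.e.\ one contained in the boundary of a single zone. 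The definition of an $H$-chain allows the link type to change from step to step ($k_{i+1}=j_i+1$ at some indices, $k_{i+1}=j_i-1$ at others), and this mixed case is precisely the new content of the proposition beyond the clockwise case already treated in the text before it---the paper flags it by noting that the two initial homoclinics may lie on the boundaries of \emph{different} zones, and Figure \ref{Hchain1} exhibits such a chain with itinerary $+,+,+,-,-,+$.

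For a mixed chain a single uniform sign condition fails. At each sign change of the itinerary the chain passes from the upper boundary of one zone to the lower boundary of another (or vice versa), and the inequality that keeps the traced trajectory moving toward the next link---rather than landing at the equilibrium point of the zone it is currently crossing---reverses. The paper handles this by recording the itinerary $I_i\in\{+,-\}$, decomposing the chain into $q+1$ alternating clockwise and counter-clockwise sub-chains ($q$ the number of sign changes), and making the sign of each partial-sum condition depend on $I_{i+1}$: one sign when the next link is upper, the opposite when it is lower, with only the total sum required to vanish. (The precise signs depend on orientation conventions, but the essential point is that they must flip with the itinerary.) Your construction, applied verbatim to a mixed chain, would at the first sign change push the trajectory to the wrong side of the perturbed homoclinic, so it lands at an equilibrium point instead of continuing along the chain, and $s_{k,j}$ never forms. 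To repair the argument you must replace your uniform inequalities by itinerary-dependent ones, and in your necessity tracing verify that ``survival through each intermediate crossing'' is likewise an inequality whose sign depends on whether that crossing occurs on the upper or the lower boundary of the zone being traversed.
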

\begin{proof}
Either $s_{k,j_0}$ and $s_{k_0,j}$ belong to a closed $H$-chain, in which case we can define an $H$-chain such that $s_{k,j_0}$ is to the left of $s_{k_0,j}$; if they do not belong to some closed $H$-chain, then we assume for an open $H$-chain that $s_{k,j_0}$ is to the left of $s_{k_0,j}$. This gives a natural ordering of an $H$-chain with $s_{k,j_0}$ and $s_{k_0,j}$ at it's ends: $s_{k,j_0}=s_{k_1,j_1}, \ s_{k_2,j_2}, \dots , s_{k_n,j_n}=s_{k_0,j}$. For $i=2,\dots,n$, there is a sequence $I_i$ of length $n-1$ with elements in $\{ +, -\}$ corresponding to whether $k_{i+1}=j_i\pm 1$, $i=1,\dots,n-1$. We consider $I_1$ not defined. If there are $q$ sign changes in this itinerary, then the $H$-chain can be decomposed into a sequence of $q+1$ clockwise and counterclockwise $H$-chains, which overlap on the ends (see Figure \ref{Hchain1}). We can then allow $s_{k,j}$ to form by the following conditions on perturbations of the associated $\tau_i$, $i=1,\dots,n$.
For $i=1,\dots,n-1$, if $I_{i+1}=+$, then $\sum \limits_{j=1}^{i} \Im (\tau_i) <0$;  if $I_{i+1}=-$, then $\sum \limits_{j=1}^{i} \Im (\tau_i) >0$; and $\sum \limits_{j=1}^{n} \Im (\tau_i) =0$ (see Figure \ref{Hchain2}). If $s_{k,j_0}$ and $s_{k_0,j}$ do not have an $H$-chain in common, then there is no overlapping sequence of zones through which $s_k$ can have access to $s_j$.
\end{proof}
% %%%%
   \begin{figure}%
     \centering
     \resizebox{!}{4cm}{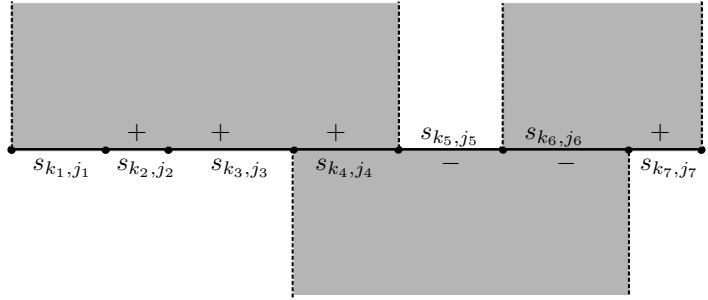}
     \caption{There a natural ordering of an $H$-chain with $s_{k,j_0}$ and $s_{k_0,j}$ at it's ends: $s_{k,j_0}=s_{k_1,j_1}, \ s_{k_2,j_2}, \dots , s_{k_7,j_7}=s_{k_0,j}$. In this example, the sequence $I_i$ for $i=2,\dots,n$ is $I=+,+,+,-,-,+$. We consider $I_1$ not defined. There are $2$ sign changes in this itinerary, so there are three zones corresponding to the three  counterclockwise and clockwise  $H$-chains, which overlap on the ends. }
    \label{Hchain1}
  \end{figure}
% %%%%
% %%%%
   \begin{figure}%
     \centering
     \resizebox{!}{4cm}{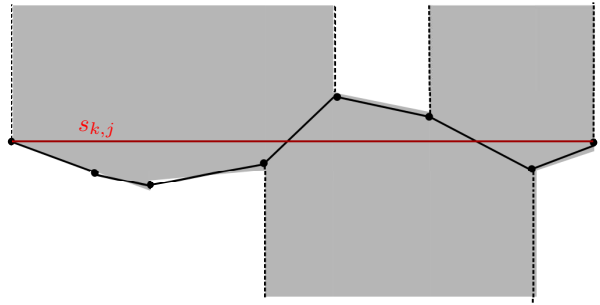}
     \caption{For the $H$-chain as in Figure \ref{Hchain1},  $s_{k,j}$ can form if the appropriate conditions on partial sums of perturbations of the associated $\tau_i$, $i=1,\dots,7$ are satisfied. }
    \label{Hchain2}
  \end{figure}
% %%%%
In general, several homoclinic separatrices can form simultaneously under small perturbation. An exhaustive analysis of the non-splitting bifurcations is an aim of future work.\par
%%-----------------------------
 %----------------------------
\section{No Cell-decomposition}
\label{nocelldecomp}
It turns out that stratifying parameter space by combinatorial invariants does not lead to a cell-decomposition of parameter space.
In general, $\mathcal{C}_0\cap \partial \mathcal{C} \neq \emptyset \nRightarrow \mathcal{C}_0\subset \partial \mathcal{C}$.  We show this by showing that two loci of the same dimension can be adjacent, as demonstrated by the following example. \par
Consider the slice of the combinatorial class $\CC_0 \in \Xi_4$ having combinatorial invariant $[0 \ 1 ]2[3 \ 4]5$ (see Figure \ref{Nocelldecomp2}).
%\begin{equation}
%H=\emptyset, \quad [1]=\{1,2,4,5\}, \quad [0]=\{0\}, \quad [3]=\{3\},
%\end{equation}
Note that  $\dim (\CC_0)=2s_0+h_0=2(2)+0=4$.\par
   \begin{figure}[h]%
 \centering
 \huge
     \resizebox{!}{4cm}{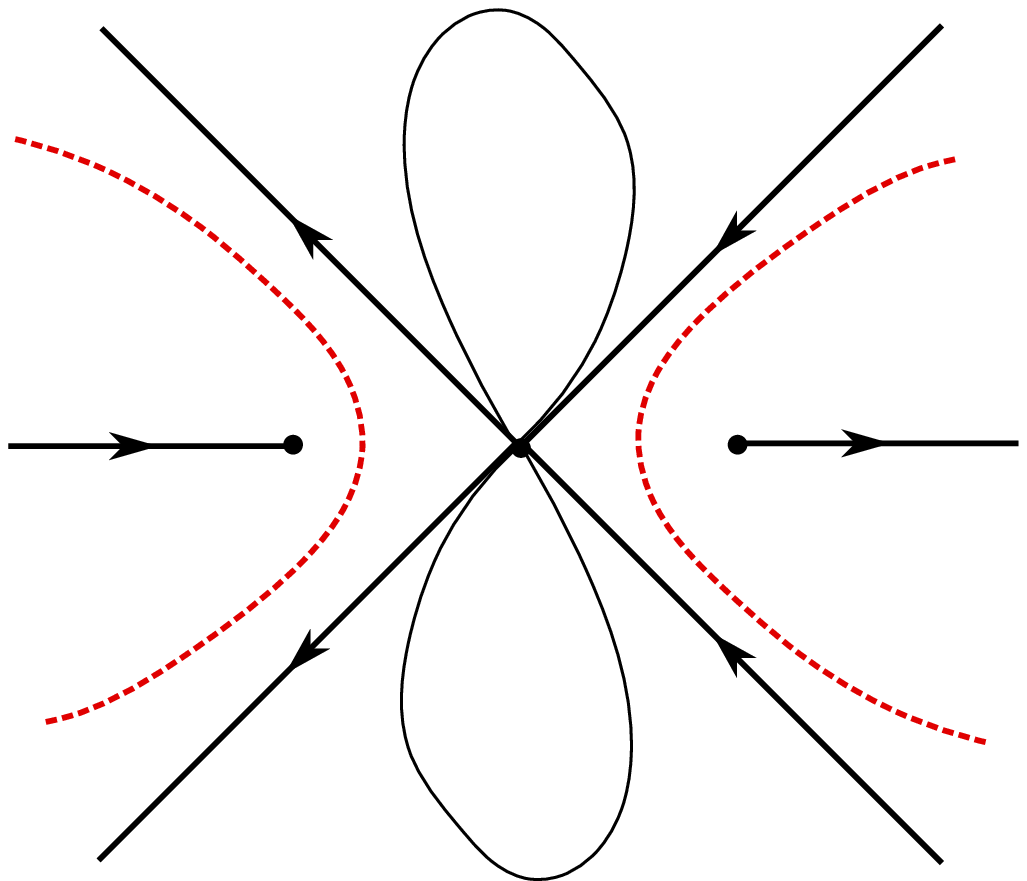}
 \caption{A class $\mathcal{C}_0$ with combinatorics $[0 \ 1 ]2[3 \ 4]5$ having a double equilibrium point and two simple equilibrium points $\zeta_{[0]}$ and $\zeta_{[3]}$. There are two $\alpha \omega$-zones and no homoclinic separatrices, so $\dim (\CC_0)=2s_0+h_0=2(2)+0=4$.}
 \label{Nocelldecomp2}
 \end{figure}
This combinatorial class is adjacent to the combinatorial class $\CC$ having combinatorial invariant $[0(1 \ 2)3](4 \ 5)$,
%\begin{equation}
%H=\{ \{1,2\},\{4,5 \}\},  \quad [0]=\{0\}, \quad [3]=\{3\},
%\end{equation}
and $\dim (\CC)=2s+h=2(1)+2=4$  (see Figure \ref{Nocelldecomp}).  If in $\CC \cong \HH \times \R_+^2$, set $\tau_1=\tau_2 =x$ and let  $\Re(\alpha)=-x$, let $x \rightarrow \infty$.  Then we go to the boundary of the class $\mathcal{C}$ while the residues $\Res (1/P,\zeta_{[0]})=\tau_1+\alpha $ and $\Res (1/P,\zeta_{[3]})=-\tau_2-\alpha $ stay fixed   and the residues  $\Res(1/P,\zeta_{ [2]})$  and $\Res(1/P,\zeta_{ [5]})$ for the centers having $s_{1,2}$ and $s_{5,4}$ respectively on the boundaries of their basins go to infinity. By Lemma \ref{mstarinc} below, at least two points must collide, but these include  neither $\zeta_{[0]}$ nor $\zeta_{[3]}$. This shows that   $\mathcal{C}_0\cap \partial \mathcal{C} \neq \emptyset $. Since these two loci have the same dimension, $\mathcal{C}_0\not\subset \partial \mathcal{C}$.
\begin{lemma}
\label{mstarinc}
If we stay in a bounded subset of any combinatorial class $\mathcal{C}$, i.e. the roots of $P$ stay bounded, then $\Res (1/P,\zeta)\rightarrow \infty$ if and only if $|\zeta - \zeta_i|\rightarrow 0$ for at least one other root $\zeta_i$.
%and one or more $|\alpha|, \ |\tau|\rightarrow \infty$, then
%\[
%m^{\ast}=\sum\limits_{[\ell]\subseteq L } p_{[\ell]}/2
%\]
% increases, $p_{[\ell]}$ defined in Section \ref{eqcl}.
\end{lemma}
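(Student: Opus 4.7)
The plan is to use the explicit residue formula together with the hypothesis that the combinatorial class preserves multiplicities of equilibrium points. Because we stay inside $\mathcal{C}$ along the sequence, the multiplicity $m$ of $\zeta$ as a root of $P$ is constant, so I can factor $P(z)=(z-\zeta)^m Q(z)$ with $Q(\zeta)\neq 0$, where $Q(z)=\prod_{\zeta_j\neq\zeta}(z-\zeta_j)^{m_j}$. Then
\[
\Res\!\left(\frac{1}{P},\zeta\right)=\frac{1}{(m-1)!}\left.\frac{d^{m-1}}{dz^{m-1}}\frac{1}{Q(z)}\right|_{z=\zeta}.
\]
Both directions will be read off from this formula.

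For the ``only if'' direction, I would argue by contrapositive. Suppose every other root $\zeta_i$ stays at distance at least some $\delta>0$ from $\zeta$ along the sequence, while all roots lie in a fixed bounded region. Then on the disk $D(\zeta,\delta/2)$ the function $1/Q$ is holomorphic, and since $|Q|$ is bounded away from $0$ on $\partial D(\zeta,\delta/2)$ with a bound depending only on $\delta$ and the fixed bound on the roots, Cauchy's integral formula gives uniform bounds on all derivatives of $1/Q$ at $\zeta$ up through order $m-1$. Hence $|\Res(1/P,\zeta)|$ stays bounded, contradicting the assumption $\Res(1/P,\zeta)\to\infty$.

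For the ``if'' direction, suppose some distinct root approaches $\zeta$; pick $\zeta_{i_0}$ realizing the minimal distance, and set $\epsilon=\zeta_{i_0}-\zeta\to 0$. Factor further as $Q(z)=(z-\zeta_{i_0})^{m_{i_0}}\tilde{Q}(z)$ with $\tilde{Q}(\zeta)$ bounded away from $0$ (this is where the choice of the nearest approaching root is used: all remaining factors of $\tilde{Q}$ involve differences $\zeta-\zeta_j$ bounded below by a positive constant along the sequence). Expanding
\[
\frac{1}{Q(\zeta+u)}=\frac{1}{\tilde{Q}(\zeta+u)}\cdot\frac{(-1)^{m_{i_0}}}{\epsilon^{m_{i_0}}}\sum_{k\ge 0}\binom{m_{i_0}+k-1}{k}\left(\frac{u}{\epsilon}\right)^{k},
\]
and extracting the coefficient of $u^{m-1}$, I find the dominant term is of order $\epsilon^{-(m_{i_0}+m-1)}$ with a nonzero coefficient proportional to $1/\tilde{Q}(\zeta)$, so the residue blows up at the explicit rate $|\epsilon|^{-(m_{i_0}+m-1)}$.

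The main technical obstacle is the case where several other roots approach $\zeta$ simultaneously (possibly at different rates), or where other collisions not involving $\zeta$ happen at the same time. The ``only if'' argument is insensitive to such collisions as long as none of them involves $\zeta$. For the ``if'' direction, selecting the single nearest approaching root and lumping the rest into $\tilde{Q}$ is enough to isolate the dominant blow-up term; the contributions of additional colliding roots only reinforce the blow-up, and the remaining factors in $\tilde{Q}$ continue to give a nonzero leading coefficient because roots within $\mathcal{C}$ are distinct.
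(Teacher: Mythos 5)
Your overall strategy is the same as the paper's: factor $P(z)=(z-\zeta)^m Q(z)$, express $\Res(1/P,\zeta)$ explicitly in terms of the root differences, and read off both implications from their asymptotics. Your ``only if'' direction is correct, and in fact more rigorous than the corresponding step in the paper: the Cauchy estimate on $D(\zeta,\delta/2)$ genuinely bounds all derivatives of $1/Q$ through order $m-1$, while the paper only appeals to the numerator of a rational function having smaller degree than its denominator. Your ``if'' direction is also correct in the case where exactly one root $\zeta_{i_0}$ approaches $\zeta$ and all remaining roots stay a definite distance $\delta$ away: there $\tilde{Q}(\zeta)$ is bounded above and below, the Taylor coefficients of $1/\tilde{Q}$ at $\zeta$ are bounded, and your expansion isolates a leading term of size $|\epsilon|^{-(m_{i_0}+m-1)}$ with coefficient bounded away from zero.

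The gap is the multi-collision case, and it is genuine: the claim that ``the contributions of additional colliding roots only reinforce the blow-up'' is false. If several roots approach $\zeta$, they all sit inside $\tilde{Q}$, so $\tilde{Q}(\zeta)$ is no longer bounded away from zero, the coefficients of $1/\tilde{Q}$ are no longer bounded, and the terms of the convolution can cancel. Concretely, take $\zeta=0$ with $m=2$ and
\[
P(z)=z^{2}\,(z-\epsilon)\,(z+\epsilon-\epsilon^{10}),
\]
so both simple roots collide with $0$ as $\epsilon\rightarrow 0$, while all roots stay bounded. Here $Q'(0)=-\epsilon^{10}$ because the two colliding roots cancel almost exactly, and
\[
\Res(1/P,0)=-\frac{Q'(0)}{Q(0)^{2}}=\frac{\epsilon^{10}}{\left(\epsilon^{2}-\epsilon^{11}\right)^{2}}\sim\epsilon^{6}\rightarrow 0,
\]
not $\infty$. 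Your remark that ``roots within $\mathcal{C}$ are distinct'' does not repair this: distinctness at each parameter value does not prevent differences from tending to zero along the sequence. What the example shows is that the ``if'' implication cannot follow from root asymptotics and boundedness alone; any honest proof must use the hypothesis that the sequence stays in a single combinatorial class to exclude such cancelling configurations (a family like the one above must be leaving every class, but nothing in your argument detects that). In fairness, the paper's own proof has exactly the same defect --- it asserts that $\Res(1/P,\zeta)\rightarrow\infty$ if and only if the denominator tends to $0$, silently assuming the numerator stays bounded away from zero --- so you have reproduced the published argument including its weak point; the difference is that your dismissal of the multi-collision case is not merely unproved but false as stated.
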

\begin{proof}
%Every analytic invariant is the sum of some residues, so if some $|\alpha|$ or $|\tau|\rightarrow \infty$, at least one $\Res (1/P,\zeta) \rightarrow \infty$.
Each residue $\Res(1/P,\zeta)$
is a rational function of the $(\zeta-\zeta_i)$, whose denominator has strictly larger degree than the numerator and takes the form
\begin{equation}
\left( \prod \limits_{i=1}^{d-m}(\zeta-\zeta_i)\right)^{2(m-1)},
\end{equation}
where some of the $\zeta_i$ might be identical and $m$ is the multiplicity of $\zeta$.
 Since by assumption the $|\zeta-\zeta_i|<\infty$, then $\Res (1/P,\zeta) \rightarrow \infty$ if and only if the denominator $\rightarrow 0$, i.e.  at least one of the $(\zeta-\zeta_i)\rightarrow 0$.
\end{proof}
The example above furthermore shows that possible bifurcations depend not only on the combinatorial data, but also on the on the analytic data.
   \begin{figure}[h]%
 \centering
 \huge
     \resizebox{!}{4cm}{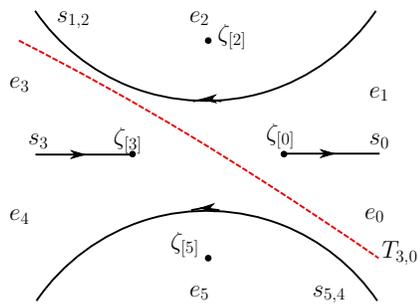}
 \caption{A class $\mathcal{C}$ with combinatorics $[0(1 \ 2)3](4 \ 5)$ having one sink $\zeta_{[3]}$, one source $\zeta_{[0]}$, and two centers $\zeta_{[2]}$ and $\zeta_{[5]}$. There is one $\alpha \omega$-zone and two homoclinic separatrices, so  $\dim (\CC)=2s+h=2(1)+2=4$. }
 \label{Nocelldecomp}
 \end{figure}
%---------------------------------------------------------
\clearpage
\section*{Appendix A. (with Tan Lei) - Landing Separatrices are Stable}
The main result in this appendix shows that the combinatorial structure given by landing separatrices is stable in some sense. Specifically, an equilibrium point which receives a landing separatrix cannot lose this separatrix under small perturbation, unless it is a multiple equilibrium point which splits.
\begin{definition}
The \emph{non-splitting set} $B_{\zeta^0}(\tilde{P}_0)$ for $\xi_{\tilde{P}_0}$ with respect to the \eqpt \ $\zeta^0$ is the subset of the sufficiently small neighborhood of $\xi_{\tilde{P}_0} \in \Xi_d$ such that  for the \eqpt \ $\zeta^0$ of $\tilde{P}_0$, there is exactly one \eqpt \ $\zeta$ for $\xi_{\tilde{P}}$ with $|\zeta^0-\zeta|< \delta$ (that is, $\text{mult}(\zeta^0)=\text{mult}(\zeta)$). The \emph{non-splitting set} $B(\tilde{P}_0)$ for $\xi_{\tilde{P}_0}$ is the intersection of $B_{\zeta^0}(\tilde{P}_0)$ for all $\zeta^0$.
\end{definition}
%The goal is to prove that under small perturbation, a landing separatrix must land at the same \eqpt \ as it did before perturbation, whenever the \eqpt \ in question does not split.  \par
The main theorem we aim to prove is the following:
\begin{theorem}
\label{landingstablethm}
Given $\xi_{\tilde{P}_0} \in \Xi_d$, if $s_{\ell}^0$ for $\xi_{\tilde{P}_0}$ lands at $\zeta^0$, $\tilde{P}_0(\zeta^0)=0$, then for every $\xi_{\tilde{P}}$ in the non-splitting set $B_{\zeta^0}(\tilde{P}_0)$ such that $\tilde{P}$ is "close enough" (to be defined) to $\tilde{P}_0$, then $s_{\ell}$ for $\xi_{\tilde{P}}$ lands at $\zeta$, $P(\zeta)=0$, where $\lim \limits_{\tilde{P} \rightarrow \tilde{P}_0}\zeta =\zeta^0$.
\end{theorem}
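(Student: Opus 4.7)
The plan is to combine continuous dependence of trajectories of $\dot z = \tilde P(z)$ on the coefficients of $\tilde P$, on a fixed compact time interval, with a local trapping region near $\zeta^0$ whose existence persists for small perturbations in $B_{\zeta^0}(\tilde P_0)$. Once both are in place, it follows that the perturbed separatrix $s_\ell$ enters the trapping region by continuous dependence, and from there converges to the unique perturbed zero $\zeta$ in the region.

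First, I would choose a closed disk $\bar D = \bar D(\zeta^0, r)$ small enough that $\zeta^0$ is the only zero of $\tilde P_0$ in $\bar D$. By Rouché's theorem, for every $\tilde P \in B_{\zeta^0}(\tilde P_0)$ sufficiently close to $\tilde P_0$ there is a unique zero $\zeta \in \bar D$ of $\tilde P$, of the same multiplicity $m$ as $\zeta^0$, and $\zeta$ depends continuously on the coefficients. Since $s_\ell^0$ lands at $\zeta^0$, it enters $D(\zeta^0, r/2)$ at some finite time $T^0$ and remains in $\bar D$ thereafter. I would then construct an open set $U \subset D$ containing the point $w^0 := s_\ell^0(T^0)$ with the following trapping property for $\xi_{\tilde P_0}$: any trajectory entering $U$ stays in $\bar D$ for all later time and converges to $\zeta^0$. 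For a simple zero $\zeta^0$ with eigenvalue $\lambda^0 = \tilde P_0'(\zeta^0)$, landing of $s_\ell^0$ forces $\Re \lambda^0 \neq 0$, so linearization provides $U$ explicitly and uniformly in $\tilde P$ close to $\tilde P_0$. For a multiple zero of multiplicity $m \geq 2$, the local model $\lambda^0 (z - \zeta^0)^m \vf$ is conjugated to the translation $dw/d\tau = 1$ via $\tau = -1/((m-1)\lambda^0 (z-\zeta^0)^{m-1})$; the half-planes in $\tau$ yield $2(m-1)$ petals, of which $s_\ell^0$ occupies one attracting (respectively repelling) petal, and $U$ is taken strictly inside this petal.

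Second, I would establish continuous dependence of $s_\ell$ on $\tilde P$ on the time interval $[0, T^0]$. The only subtlety is that separatrices begin at the pole $\infty$ of $P$; this is handled by working in the coordinate $w = 1/z$ near $\infty$, where the leading-order behavior $\dot w \sim -w^{2-d}$ is independent of the subleading coefficients of $\tilde P$, so the asymptotic direction $2\pi \ell/(2d-2)$ picks out the same separatrix uniformly in $\tilde P$. One then parameterizes $s_\ell$ starting from a large circle $|z| = R$, where the point of intersection depends continuously on the coefficients, and invokes standard continuous dependence on the compact interval up to time $T^0$. This yields $s_\ell(T^0) \to w^0$ as $\tilde P \to \tilde P_0$, so $s_\ell(T^0) \in U$ for $\tilde P$ sufficiently close to $\tilde P_0$; the trapping property then forces $s_\ell(t) \to \zeta$, and $\zeta \to \zeta^0$ as $\tilde P \to \tilde P_0$, proving the theorem.

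I expect the main obstacle to be the robustness of the trapping region when $\zeta^0$ is a \multeq \ of multiplicity $m \geq 2$. The conjugacy to translation depends on $\tilde P$, and the higher-order terms of $\tilde P$ near $\zeta^0$ perturb under deformation, so one must show that a petal chosen strictly inside the ideal attracting petal for $\tilde P_0$ remains attracting for $\xi_{\tilde P}$ throughout a neighborhood of $\tilde P_0$ in $B_{\zeta^0}(\tilde P_0)$. The non-splitting hypothesis is essential here: were multiplicity allowed to drop, the petal structure would fragment discontinuously into simple sinks, sources, and centers, and the perturbed $s_\ell$ could in principle be captured by a different zero emerging from $\zeta^0$ or become homoclinic.
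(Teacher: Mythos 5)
Your skeleton (compact-time continuous dependence to ferry $s_\ell$ into a neighborhood of $\zeta^0$, plus a perturbation-robust trapping region there) is a legitimate strategy, and the simple-zero case is fine: landing excludes centers, so $\Re\lambda^0\neq 0$ and hyperbolicity gives a trapping disk that persists under perturbation. But there is a genuine gap exactly where you flag ``the main obstacle'': the case of a \multeq\ of multiplicity $k\geq 2$, which is the real content of the theorem and the only place the non-splitting hypothesis matters. You assert that a region $U$ compactly contained in an attracting petal of $\xi_{\tilde{P}_0}$ ``remains attracting'' for every nearby $\xi_{\tilde{P}}$ in $B_{\zeta^0}(\tilde{P}_0)$, but you never prove it, and it is not a soft statement. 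Unlike the hyperbolic case, a multiple zero has no neighborhood basis of forward-invariant sets: the petal has the zero on its boundary, convergence to it takes infinite time, and in the rectifying coordinate $\tau\approx -1/((k-1)\lambda^0(z-\zeta^0)^{k-1})$ the trapping claim is a statement about an unbounded region. Consequently no compact-time continuous-dependence argument can reach it --- the entire difficulty lives in the infinite-time tail, where continuity in initial conditions and parameters gives nothing. To prove your petal lemma one must (a) write the perturbed field in the unperturbed rectifying coordinate as $\dot{w}=1+s\circ \Psi_0(w)$ and bound $\|s\|_{\infty}$ uniformly on the whole unbounded petal, which is precisely where multiplicity preservation enters (if the multiplicity dropped, $s=P/P_0-1$ would blow up near the zero and no uniform bound exists); and (b) show a trajectory of $\dot{w}=1+s\circ\Psi_0(w)$ entering the corresponding sector can never cross its boundary rays and has $\Re(w)\rightarrow+\infty$, hence converges to the perturbed zero. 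Items (a) and (b) are exactly the paper's proof (the uniform bound $\|s\|_{\infty,U}\leq\epsilon''$, $(\alpha,\gamma)$-stability, and the three-item proposition about $\gamma_{\ell}$), so as written your proposal reduces the theorem to an unproven lemma containing all of its difficulty.

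For comparison, the paper organizes the argument so that your two-step split is unnecessary: $\alpha$-stability produces a single singularity-free sector $S(\alpha)$ whose image $\Psi_0(S(\alpha))$ stretches from $\infty$ all the way down to $\zeta^0$, contains $s_\ell^0$, and carries a uniform bound on $s$; the perturbed separatrix enters this sector at its vertex (perturbation does not change asymptotic directions at $\infty$), and the single estimate $\arg(1+s\circ\Psi_0(w))<\alpha$ simultaneously does the job of your compact-time continuation and your trapping lemma. If you want to salvage your route, the repair is to prove the petal robustness by running this sector estimate locally; at that point you will have reproduced the paper's argument with an extra, now redundant, compact-time step. (A smaller point: the continuity in $\tilde{P}$ of the crossing point of $s_\ell$ with the circle $|z|=R$ also deserves justification, e.g.\ via the rectifying coordinate at $\infty$, though this is minor and is implicitly used by the paper as well.)
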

\noindent We will also need the following definition for inverses of rectifying coordinates:
\begin{definition}
For a (polynomial, $\infty$-germ) pair $(P, \gamma)$, define $\Psi_{P, \gamma}$ to be the inverse branch of $\Phi_P$ in a sector neighborhood of 0 as follows:
\begin{itemize}
\item
for $\gamma^+$ an outgoing $\infty$-germ, $\Psi_{P, \gamma^+}$ is defined on $D(\epsilon) \setminus \R^-$ and
coincides with $\gamma^+$ on $]0,\epsilon[$;
\item
for $\gamma^-$ an incoming $\infty$-germ, $\Psi_{P, \gamma^-}$ is defined on $D(\epsilon) \setminus \R^+$ and
coincides with $\gamma^-$ on $]-\epsilon, 0[$.
\end{itemize}
\end{definition}
\subsection{Preparation of Forms}
It is enough to consider $P_0$ and $P$ of the form
\begin{align}
P_0(z) &= z^k Q_0(z), \quad Q_0(0) \neq 0\nonumber \\
P(z) &= z^k Q(z), \quad Q(0) \neq 0.
\end{align}
First of all, any arbitrary $\xi_{\tilde{P}_0}$ with an \eqpt \ $\zeta^0$ of multiplicity $k$ is conformally conjugate to a unique $\xi_{P_0}$ with $P_0(z) = z^k Q_0(z)$, $Q_0(0) \neq 0$, by the translation $T_{\zeta^0}: z \mapsto z-\zeta^0$, and any $\xi_{\tilde{P}}$ in the non-splitting set $B_{\zeta^0}(\tilde{P}_0)$ has an \eqpt \ $\zeta$ of multiplicity $k$ such that $|\zeta^0-\zeta| < \delta$, so each $\tilde{P}$ in the non-splitting set can be uniquely conformally conjugated to $\xi_P$ with $P(z) = z^k Q(z)$, $Q(0) \neq 0$, by the translation $T_{\zeta}: z \mapsto z-\zeta$. Conjugating by translations does not change the asymptotic directions and hence labeling of the separatrices as compared to the original vector fields $\xi_{\tilde{P}_0}$ and $\xi_{\tilde{P}}$.\par
%Note also that by Lemma \ref{alphaforaffineconj} below, under these conjugacies, the $\alpha$-stability is not affected.
%\begin{lemma}[affine conjugacies \cite{Sent}]
We will write $P(z)=\left( 1+s(z)\right) P_0(z)$ and when we say that $P$ is \emph{close enough} to $P_0$, we mean that we have a uniform bound on s: $\|s\|_{\infty,U}\leq \epsilon''$, where $U$ is a restriction of $\Psi_0(S(\alpha))$ such that we avoid a neighborhood of the roots of $P$ and $P_0$ (except for 0). It is possible to demand such a uniform bound if $P$ and $P_0$ are close in terms of coefficients or roots by the following. Since $s(z)=\frac{P(z)}{P_0(z)}-1$ and $U$ avoids the roots of $P_0$, there is a uniform bound on $s$ on any compact subset of $U$ bounded away from 0 and $\infty$. Notice that near $\infty$, both $P\sim z^d$ and $P_0\sim z^d$, so $s \approx 0$ near $z=\infty$. Near $z=0$, the dominating terms are the constant terms, so $s(z) \approx a_0/a^0_0-1 \approx 0$ since we demand $P_0$ and $P$ are close in terms of coefficients. \par
Theorem \ref{landingstablethm}  hinges on the idea of $\alpha$-stability, as described in \cite{BT2007}. The notion of alpha-stability as presented in \cite{BT2007} is included here for completeness, and it should be compared to the notion of \emph{tolerant angle} in \cite{Sent}.
For $\alpha \in ]0,\frac{\pi}{2}[$, let us define a sector neighborhood of $\R^{\pm}$ by
\begin{equation}
S^+(\alpha)=\{ w \in \C^{\ast} \mid |\arg (w)|<\alpha   \}
\end{equation}
and
\begin{equation}
S^-(\alpha)=\{ w \in \C^{\ast} \mid |\pi-\arg (w)|<\alpha   \}.
\end{equation}
\begin{definition}[$\alpha$-stability as in \cite{BT2007}]
Given a polynomial P and an $\infty$-germ $\gamma$,  we say that $P$ is \emph{$(\alpha, \gamma)$-stable}, for $\alpha \in ]0,\frac{\pi}{2}[$, if $\Psi_{P, \gamma}$ extends holomorphically to the entire sector $S^+(\alpha)$ (if $\gamma$ is an outgoing germ), or $S^-(\alpha)$ (if $\gamma$ is an incoming germ). We will denote by $\Psi_{P, \gamma}:S^{\pm}(\alpha)\rightarrow \C $ this extension.
\end{definition}
\begin{remark}
We will only prove the theorem for \emph{outgoing} landing separatrices since the proof is completely analogous for \emph{incoming} separatrices.  Therefore, we will only be looking at positive sectors $S^+(\alpha)$, and will use the simpler notation $S(\alpha)$ for such a sector.
\end{remark}
\subsection{Landing Separatrices are Stable}
The idea of the main theorem is to show that if $s^0_{\ell}$ lands for $\xi_{P_0}$, then there exists a protective sector from infinity to 0 on the Riemann sphere where all trajectories that enter that sector converge to 0 (the equilibrium point). Small enough perturbations of $P_0$ guarantee that the corresponding $s_{\ell}$ is also trapped in this sector, and hence must converge to 0 as well.\par
We first show the existence of the protective sector $S(\alpha)$ in rectifying coordinates.
Assume that the separatrix $s^0_{\ell}$ is landing at the multiplicity $k$ equilibrium point $\zeta=0$ for $\xi_{P_0}$. We will show that any sequence approaching infinity in $S(\alpha)$ must approach $\zeta^0=0$.
\begin{proposition}
 If a separatrix $s^0_{\ell}$ is landing, then there exists an $\alpha$ such that $\xi_{P_0}$ is $(\alpha,\gamma^0_{\ell})$-stable.\par
 \end{proposition}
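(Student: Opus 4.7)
The plan is to realize the desired $(\alpha,\gamma^0_\ell)$-stability as an analytic continuation of $\Psi_0 := \Psi_{P_0,\gamma^0_\ell}$ from its initial domain $D(\epsilon)\setminus\R^-$ outward along $\R^+$. After normalizing the constant of integration in $\Phi_0 := \Phi_{P_0}$, the outgoing separatrix $s^0_\ell$ is sent onto $\R^+$, and the landing hypothesis translates into $\Psi_0(w)\to 0$ as $w\to +\infty$ along $\R^+$. The problem therefore reduces to exhibiting a holomorphic extension of $\Psi_0$ to an open sectoral neighborhood $S(\alpha)$ of $\R^+$.

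I would cover $S(\alpha)$ by three overlapping open pieces. The first is the given initial neighborhood $D(\epsilon)\setminus\R^-$, which contains $S(\alpha)\cap\{|w|<\epsilon/2\}$ for any $\alpha<\pi$. The second is a tubular neighborhood of the compact middle sub-arc $A\subset s^0_\ell$ corresponding to $\epsilon/2\le w\le R$: since $A$ is compact in $\C$ and disjoint from the zero set of $P_0$, there is an open set $V\supset A$ on which $\Phi_0$ is injective and a local biholomorphism, and by compactness the image $\Phi_0(V)$ contains a uniform horizontal strip of the form $\{\,|\Im w|<\delta,\ \epsilon/2<\Re w<R\,\}$, on which $\Psi_0=\Phi_0^{-1}$ is holomorphic.

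The third and most delicate piece is a large-$|w|$ sector near the landing point. Writing $P_0(z)=z^k Q_0(z)$ with $Q_0(0)\neq 0$, direct integration yields
\begin{equation*}
\Phi_0(z)=\begin{cases}\dfrac{1}{Q_0(0)}\log z + h(z), & k=1,\\[4pt] \dfrac{-1}{(k-1)Q_0(0)\,z^{k-1}}+h(z), & k\ge 2,\end{cases}
\end{equation*}
with $h$ holomorphic at $0$. When $k=1$, the forward-landing hypothesis forces $\Re(Q_0(0))<0$ (so $0$ is a sink), and the corresponding inverse $\Psi_0(w)\approx c\,e^{Q_0(0)w}$ is entire and tends to $0$ throughout the half-plane $\{\Re(Q_0(0)w)<0\}$, which contains some $S(\alpha)$. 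When $k\ge 2$, the branch of the $(k-1)$-th root selected by $\gamma^0_\ell$ maps the slit exterior $\{|w|>R\}\setminus\R^-$ biholomorphically onto a sectoral petal at $\zeta=0$, giving an extension covering $S(\alpha)\cap\{|w|>R\}$ for any $\alpha<\pi$.

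The three extensions agree on overlaps by the identity theorem, since each coincides with the original $\Psi_0$ on segments of $\R^+$, and hence patch together into a single holomorphic map $\Psi_0:S(\alpha)\to\C$ for any positive $\alpha$ smaller than the three widths produced above. I expect the main obstacle to be the bookkeeping needed to guarantee a uniform positive lower bound on $\alpha$: the compactness of $A$ and the explicit local form of $\Phi_0$ at $\zeta=0$ both enter here, and one must verify that the tubular width $\delta$ in the middle piece does not degenerate as the endpoint of $A$ is pushed closer to $\zeta=0$.
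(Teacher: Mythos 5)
Your decomposition into three pieces (initial disk, long flow box along a compact sub-arc, local analysis at the landing point) is sound for the first two pieces, and your $k=1$ analysis is essentially correct: landing at a simple zero forces $\Re\,Q_0(0)<0$, and a Koenigs-type inversion of $\Phi_0=\tfrac{1}{Q_0(0)}\log z+h(z)$ does extend $\Psi_0$ over a translated half-plane containing a sector about $\R^+$. The fatal gap is the case $k\ge 2$. First, your normal form is wrong: for $P_0(z)=z^kQ_0(z)$ one has
\begin{equation*}
\Phi_0(z)=\frac{-1}{(k-1)Q_0(0)z^{k-1}}+\cdots+\Res(1/P_0,0)\log z+h(z),
\end{equation*}
with intermediate poles and, crucially, a logarithmic term whenever the residue at $0$ is nonzero. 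Second, and more fundamentally, even with the correct normal form the claim that $\Psi_0$ extends to $\{|w|>R\}\setminus\R^-$ (hence to $S(\alpha)$ for any $\alpha<\pi$) is false, because the obstruction is not local at the zero. Take $P_0(z)=z^2(z-1)$, so $\Phi_0(z)=\tfrac1z-\log z+\log(z-1)$ and $\Res(1/P_0,0)=-1$. The outgoing separatrix from $i\infty$ lands at the double zero $0$ with a sepal zone on one side and an $\alpha\omega$-zone (a strip of height $2\pi$, joining the source at $z=1$ to $0$) on the other. The corner of that strip and its translates under the holonomy $2\pi i\,\Res(1/P_0,0)$ are images of the point at infinity, i.e.\ algebraic branch points of $\Psi_0$ where $\Psi_0\to\infty$; after normalizing the separatrix onto $\R^+$ they sit at $2\pi i n$, $n\ge 1$. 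Hence no holomorphic extension to $S(\alpha)$ exists for any $\alpha>\pi/2$, no matter how large $R$ is taken: the maximal opening here is exactly $\pi/2$, not arbitrarily close to $\pi$.

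This is precisely why the paper's proof does not argue by multiplicity but by the types of the two zones bordering $s^0_\ell$: two sepal zones (any $\alpha<\pi$ works, matching your intuition), or one sepal zone and a chain of strips, or two strips (a cylinder, for a spiraling sink/source). In the latter two cases the admissible $\alpha$ is the minimum argument of the partial sums of the analytic invariants of the adjacent strips --- global data that a purely local study of the germ at $\zeta^0$ cannot see, since the intruding singularities are images of $\infty$, not of the equilibrium point. Your piece-three argument would need to be replaced by exactly this kind of zone-by-zone bookkeeping for it to go through when $k\ge2$.
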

 \begin{proof}
 There are only three situations for landing separatrices (see Figure \ref{alphastability}):
 \begin{enumerate}
 \item
 The separatrix $s^0_{\ell}$ is on the boundary of two sepal zones (half planes). In this case, it is obvious that there exists such an $\alpha$.
 \item
 The separatrix $s^0_{\ell}$ is on the boundary of one sepal zone (half plane) and one $\alpha \omega$-zone (strip). There might be several strips between this strip and the next half plane. Such an $\alpha$ exists if we take the argument of the minimum of the partial sums of the analytic invariants in these strips (easier understood by referring to the figure).
 \item
 The separatrix $s^0_{\ell}$  is on the boundary of two $\alpha \omega$-zones (strips). The basin of the sink or source at which $s^0_{\ell}$ lands is a union of $n$ strips with an identification (cylinder), which we can unfold in the plane as a repeating sequence of strips. Let $A_i$ be the partial sums of the associated $\int_{T}\frac{dz}{P(z)}\in \HH$, where $T$ is a transversal joining the rightmost odd and even ends in a strip (for a sink). Let $\alpha = \min \limits_{i=1,\dots,n}\arg (A_i)$. Let $A_j$ be the partial sum associated to $\alpha$. No singularities fall inside the sector $S(\alpha)$. Indeed, the "worst" singularities are those at $ c \cdot A_n + A_j$, $c\in \mathbb{N}$. Since $\arg (A_n)\geq \alpha$, then $\arg (c \dot A_n + A_j)\geq \alpha$. The same must be done for the reverse partial sums. Take $\alpha$ to be the smallest from the forward and reverse minimum angles.
 \end{enumerate}
   \begin{figure}[h]%
 \centering
 \huge
     \resizebox{!}{4cm}{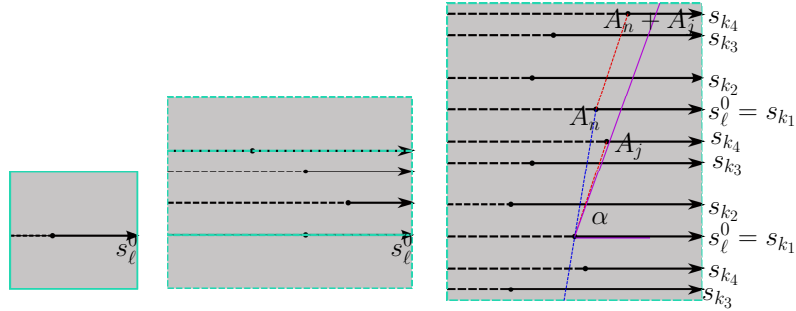}
 \caption{If $s^0_{\ell}$ is a landing separatrix, there exists an angle $\alpha$ such that $\xi_{P_0}$ is $(\alpha,\gamma^0_{\ell})$-stable. This can be seen in rectifying coordinates.  There are only three situations, depicted by the three figures above. The leftmost figure is the case where $s^0_{\ell}$ is on the boundary of two sepal zones (half planes). The middle figure is the case where $s^0_{\ell}$ is on the boundary of one sepal zone (half plane) and one $\alpha \omega$-zone (strip). The rightmost figure is the case where $s^0_{\ell}$  is on the boundary of two $\alpha \omega$-zones (strips). In all cases, it is easy to see that there is an $\alpha$ such that all singularities lie outside of the sector $S(\alpha)$. }
 \label{alphastability}
 \end{figure}
%---------------------
 \end{proof}
 \begin{proposition}
 The set $\Psi_0(S(\alpha))$ is completely contained in the basin of attraction for $\zeta^0=0$.
 \end{proposition}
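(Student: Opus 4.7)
The plan is to transfer the analysis to rectifying coordinates $w=\Phi_{P_0}(z)$, in which the sector $S(\alpha)$ is forward-invariant under the straight-line flow $w \mapsto w+t$ and the only asymptotic direction inside it corresponds to $\zeta^0=0$. Pick $w_0 \in S(\alpha)$ and set $z_0 := \Psi_0(w_0)$. For any $w \in S^+(\alpha)$ and $t \geq 0$ we have $|\arg(w+t)| \leq |\arg w| < \alpha$ (the argument shrinks in magnitude as one moves rightward), so $w+t \in S^+(\alpha)$. Since $\Psi_0$ inverts $\Phi_{P_0}$, the forward trajectory of $\xi_{P_0}$ through $z_0$ is parameterized as $z(t)=\Psi_0(w_0+t)$ for $t \geq 0$, is defined for all forward time, stays inside $\Psi_0(S(\alpha))$, and---by the choice of $\alpha$ in the preceding proposition, which arranges that no $\Phi_{P_0}$-preimage of any other equilibrium or of $\infty$ lies in $S(\alpha)$---avoids $\infty$ and every equilibrium of $P_0$ distinct from $\zeta^0$.

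The heart of the argument is to show that $\Psi_0(w) \to 0$ as $w \to \infty$ within $S(\alpha)$. Consider the translates $f_n(w) := \Psi_0(w+n)$, $n \in \mathbb{N}$, which are holomorphic on any compact $K \subset S(\alpha)$ once $n$ is large enough (since $K+n$ has small argument). They omit the finite set $E := \{\infty\} \cup \{\zeta \in \C : P_0(\zeta)=0,\ \zeta \neq 0\} \subset \hat{\mathbb{C}}$. When $|E| \geq 3$, Montel's theorem gives normality on $K$; since $f_n(t)=\Psi_0(t+n) \to 0$ for each $t>0$ by the landing property of $s^0_{\ell}$, every subsequential limit vanishes on the half-line $(0,\infty)$ and is therefore identically zero by the identity theorem, so $f_n \to 0$ uniformly on compacts of $S(\alpha)$. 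In the residual small-degree cases where $|E|<3$, one invokes the explicit local inverse of $\Phi_{P_0}$ at $\zeta^0$: for $k \geq 2$, $\Psi_0(w) \sim \bigl(-(k-1)Q_0(0)\,w\bigr)^{-1/(k-1)}$; for $k=1$, $\Psi_0(w) \sim C\exp(Q_0(0)\,w)$ with the branch dictated by the germ $\gamma^0_{\ell}$ chosen so that $\Re(Q_0(0)\,w) \to -\infty$ as $|w| \to \infty$ in $S^+(\alpha)$---both yielding $\Psi_0(w) \to 0$.

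Finally, writing $t=n+s$ with $n \in \mathbb{N}$ and $s \in [0,1)$ for large $t$, we have $z(t)=f_n(w_0+s)$ with $w_0+s$ confined to the fixed compact segment $\{w_0+s : s \in [0,1]\} \subset S(\alpha)$; the uniform convergence established above then forces $z(t) \to 0$ as $t \to \infty$. Hence $z_0=\Psi_0(w_0)$ lies in the basin of attraction of $\zeta^0=0$, which proves the proposition. The main obstacle is the rigidity step $\Psi_0(w) \to 0$: everything rests on the choice of $\alpha$ from the preceding proposition to arrange that $S(\alpha)$ avoids all singularities of $\Psi_0$, which is what lets either the Montel-plus-identity argument or the direct local asymptotic expansion pin down the limit.
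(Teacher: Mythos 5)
Your route is genuinely different from the paper's. The paper's proof is a two-line topological argument: $\Psi_0(S(\alpha))$ contains $s_\ell^0$, hence meets the basin; it is connected; and by Proposition \ref{BTClaimA} it meets neither the zeros of $P_0$ nor the incoming $\infty$-germs, so it can never cross the basin boundary. You instead prove the stronger statement directly: every forward orbit $t\mapsto \Psi_0(w_0+t)$ converges to $0$, using forward-invariance of the sector under $w\mapsto w+t$, Montel normality of the translates $f_n(w)=\Psi_0(w+n)$, and the identity theorem to pin every subsequential limit to $0$ along the ray $(0,\infty)$. Those steps (forward invariance, the identification of $\Psi_0(w_0+t)$ with the flow, normality, the identity-theorem rigidity, and the final compact-segment argument) are all correct, and your argument has the merit of not needing the basin-boundary structure at all.

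There is, however, a concrete gap in your case analysis. Your omitted set is $E=\{\infty\}\cup\{\zeta\neq 0: P_0(\zeta)=0\}$, so the case $|E|<3$ is not only the ``small-degree'' case $P_0(z)=z^d$: it includes every $P_0(z)=z^k(z-a)^{d-k}$ with exactly one other distinct root $a$, in any degree. For such $P_0$ your fallback fails, because the expansions $\Psi_0(w)\sim\bigl(-(k-1)Q_0(0)\,w\bigr)^{-1/(k-1)}$ and $\Psi_0(w)\sim C\exp(Q_0(0)\,w)$ are asymptotics of the \emph{local} inverse of $\Phi_{P_0}$ at $\zeta^0$, valid only where $\Psi_0$ is already known to be close to $0$; asserting that they govern the sector extension $\Psi_0$ as $w\to\infty$ throughout $S(\alpha)$ presupposes exactly what the proposition is supposed to prove. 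The repair is simple and is in fact the first half of Proposition \ref{BTClaimA}: $\Psi_0$ also omits the value $0$ itself, since if $\Psi_0(w^\ast)=0$ then uniqueness of solutions of $\dot z=P_0(z)$ would force $\Psi_0$ to be constant along a line segment, contradicting that it is a nonconstant inverse branch (equivalently, a zero is reached only in infinite time). Adjoining $0$ to your omitted set gives at least three omitted values whenever $P_0\neq z^d$, so Montel applies in all those cases; and for $P_0(z)=z^d$ your power-law formula for $\Psi_0$ is exact rather than asymptotic, which closes the one remaining case.
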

 \begin{proof}
  The set $\Psi_0(S(\alpha))$ intersects the basin of $\zeta^0$ since it contains separatrix $s_{\ell}^0$. Furthermore, this set is connected, so if not entirely contained in the basin, it must intersect the boundary of the basin somewhere, which is not possible by Proposition \ref{BTClaimA} below.
 \end{proof}
 \begin{proposition}[\cite{BT2007}]
 \label{BTClaimA}
 $\Psi_0 \left(S(\alpha)\right)$ intersects neither the zeros nor the incoming $\infty$-germs of $P_0$.
  \end{proposition}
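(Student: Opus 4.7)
The plan is to treat the two assertions separately, since they rely on rather different features of $\Psi_0$. The first assertion---that $\Psi_0(S(\alpha))$ avoids the zeros of $P_0$---is essentially definitional. Since $\Psi_0$ is by hypothesis a holomorphic inverse branch of $\Phi_0$ on $S(\alpha)$, one has $\Phi_0 \circ \Psi_0 = \id$. If there existed $w_0 \in S(\alpha)$ with $\Psi_0(w_0) = \zeta$ for some zero $\zeta$ of $P_0$, this would force the finite value $\Phi_0(\zeta) = w_0$. But $1/P_0$ has a pole at $\zeta$ of order equal to the multiplicity of the zero, so $\Phi_0(z) = \int dz/P_0(z)$ has either a logarithmic singularity (simple zero) or a genuine pole (multiple zero) at $\zeta$; in either case $|\Phi_0(z)| \to \infty$ as $z \to \zeta$, contradicting finiteness of $w_0$.

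For the second assertion I would combine a local argument near $\infty \in \C$ with a global argument for the rest of the sector. Locally, since $1/P_0(z) = z^{-d} + O(z^{-d-1})$ near $\infty$, integration gives $\Phi_0(z) \sim -\frac{1}{(d-1)z^{d-1}}$, so $\Phi_0$ is a $2(d-1)$-fold branched covering from a punctured neighborhood of $\infty$ onto a punctured disk around $w = 0$. The $2(d-1)$ local branches of $\Phi_0^{-1}$ near $w = 0$ correspond bijectively to the $2(d-1)$ asymptotic directions at $\infty$, i.e.\ to the $\infty$-germs---half outgoing, half incoming; our $\Psi_0$ is the branch realizing $\gamma_\ell^{0,+}$. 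Since $S(\alpha) \cap D(\epsilon)$ is an angular sector of opening $2\alpha < \pi$, the $(d-1)$-fold contraction of angle makes $\Psi_0(S(\alpha) \cap D(\epsilon))$ a narrow angular sector at $\infty$ of opening less than $\pi/(d-1)$ about our outgoing direction, hence strictly disjoint from the angular sectors associated to all other $\infty$-germs---in particular, disjoint from every incoming one.

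For the remaining part of $S(\alpha)$ (with $|w|$ bounded below) the global argument proceeds via the preceding proposition: $\alpha$ was chosen precisely so that $S(\alpha)$ contains no singularity of the rectified surface, i.e.\ no finite-$w$ end and no boundary point corresponding to an equilibrium. Consequently $S(\alpha)$ embeds into the union of the rectified images of the zones adjacent to $s_\ell^0$ (glued along the real axis $s_\ell^0$), so $\Psi_0(S(\alpha))$ lies in the corresponding union of zones of $\C$, hence in $\C \setminus \Gamma_{P_0}$. Since every incoming $\infty$-germ lies on an incoming separatrix, hence in $\Gamma_{P_0}$, the image cannot meet it.

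The main obstacle will be making this global step completely rigorous in case (3) of the preceding proposition, where $s_\ell^0$ separates two strips and the relevant basin unfolds in the plane as an infinite chain of strips glued at sinks (or sources). Here one must check that the angular constraint $|\arg w| < \alpha$ really does keep the sector inside this unfolded chain and away from all the ``singularities'' $c \cdot A_n + A_j$ appearing at the strip corners, and this is precisely why the definition of $\alpha$ takes the minimum over all partial sums $\arg(A_i)$ in both forward and reverse directions; verifying $\arg(c A_n + A_j) \geq \alpha$ uniformly in $c \in \N$ is the one estimate that has to be carried out carefully.
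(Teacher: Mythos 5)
Your treatment of the first assertion (the zeros) is correct, and is a legitimate alternative to the paper's: you argue that every branch value of $\Phi_0$ blows up in modulus near a zero, contradicting $\Phi_0\circ\Psi_0=\id$, whereas the paper simply notes that an equilibrium point can only be reached in infinite time. Either route works.

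The second assertion is where your proposal has genuine gaps. Your global step claims that $\Psi_0(S(\alpha))$ lies in $\C\setminus\Gamma_{P_0}$; this is false. The image contains $s_\ell^0$ itself (it is $\Psi_0(\R^+)$), and in cases (2) and (3) of the preceding proposition the rays $\arg w=\pm\beta$, $0<\beta<\alpha$, eventually leave any strip of finite height, so the sector necessarily crosses the other outgoing landing separatrices along which the strips of the basin are glued --- all of which lie in $\Gamma_{P_0}$. The statement you actually need is that the image lies in the \emph{basin} of $\zeta^0$, which does contain outgoing landing separatrices but no point of any incoming germ (points of incoming germs reach $\infty$ in finite forward time, hence cannot converge to $\zeta^0$). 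Even with that repair, your route requires showing that the whole sector stays inside the unfolded chain of strips, and you explicitly defer exactly that verification in case (3) (``the one estimate that has to be carried out carefully''), which is the hardest and most important case; so the proof is incomplete precisely where the content lies. Note also that your logic inverts the paper's: the paper proves the present proposition \emph{first}, by a soft argument, and only then deduces basin containment from it (the image is connected, meets the basin, and by this proposition cannot touch the basin boundary); deriving the proposition \emph{from} basin containment, as you do, is what forces you into the detailed sector geometry.

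The missing idea is the paper's two-line dynamical argument, which needs no geometry at all: if $\Psi_0(w_0)\in\gamma^-$ for an incoming germ $\gamma^-$, then the forward trajectory through $\Psi_0(w_0)$ reaches $\infty$ at some finite time $t_0>0$; by uniqueness of solutions that trajectory is $t\mapsto\Psi_0(w_0+t)$; but $S(\alpha)$ is invariant under adding positive reals, so $w_0+t_0\in S(\alpha)$, where $\Psi_0$ is holomorphic and hence $\C$-valued, giving $\Psi_0(w_0+t_0)\neq\infty$ --- a contradiction. This makes your local analysis near $w=0$ unnecessary as well; incidentally, that analysis misstates $\Phi_0$ near infinity as a $2(d-1)$-fold covering, when it is $(d-1)$-fold, the $2(d-1)$ germs arising as preimages of $\R^+$ and $\R^-$ rather than as distinct sheets.
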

 \begin{proof}
The first part is due to that fact that it takes an infinite time to reach a zero.  For the second part, assume $\Psi_0(w_0)\in \gamma^-$ for some incoming $\infty$-germ $\gamma^-$ and some $w_0 \in S(\alpha')$ for $0<\alpha '<\alpha $ %we need this restriction so that defined on neighborhood of w0+t_0.
Then, by definition of incoming $\infty$-germs, the trajectory with initial point $\Psi_0(w_0)$ reaches $\infty$ at some positive finite time $t_0$. However, by uniqueness of solution, this trajectory coincides with $\Psi_0 (w_0+t_0)$. The fact that $\Psi_0$ is defined on a neighborhood of $w_0+t_0$ implies that $\Psi_0(w_0+t_0)\neq \infty$. This leads to a contradiction.
 \end{proof}
 We now need to show that any sequence that tends to infinity in $S(\alpha)$ tends to $\zeta^0$, not just those in the flow of $\xi_{P_0}$. \par
Let $S(\alpha)_R=  S(\alpha) \cap \{w \mid Re(w)>R \}$.
\begin{theorem}[Adapted from \cite{BT2007}]
There is a zero $\zeta^0$ of $P_0$ such that
\begin{equation}
\lim_{R\rightarrow +\infty} \Psi_0(S(\alpha)_R)=\zeta^0.
\end{equation}
\end{theorem}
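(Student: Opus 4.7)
The plan is to deduce the limit from a normal families argument applied to a rescaled family of inverse branches, using that $\Psi_0(S(\alpha))$ lies in the (bounded) basin of attraction of $\zeta^0$. Concretely, this basin is a bounded region of $\C$ (its boundary is built from finitely many separatrices, and $\infty$ appears only through boundary accesses rather than as an interior point), and by the preceding proposition $\Psi_0(S(\alpha))$ lies in this basin, so $\Psi_0\colon S(\alpha)\to\C$ is uniformly bounded. Using the scaling invariance $\lambda S(\alpha)=S(\alpha)$ for $\lambda>0$, I would consider the rescaled family
\[
\psi_R(w)\;:=\;\Psi_0(Rw), \qquad R>0,\ w\in S(\alpha),
\]
which is uniformly bounded and hence normal by Montel.

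Let $\psi_{R_n}\to\psi$ be any locally-uniform subsequential limit with $R_n\to\infty$. Differentiating the rectifying identity $\Psi_0'(w)=P_0(\Psi_0(w))$ gives $\psi_R'(w)=R\cdot P_0(\psi_R(w))$, and since $\psi_{R_n}'$ stays locally bounded (by Cauchy's formula applied to the bounded family $\psi_{R_n}$) we obtain $P_0(\psi_{R_n}(w))=\psi_{R_n}'(w)/R_n\to 0$ locally uniformly. Thus $P_0\circ\psi\equiv 0$, and by continuity $\psi$ equals a constant zero $\zeta^{\ast}$ of $P_0$. To identify this zero I would evaluate at $w=1$: $\psi_R(1)=\Psi_0(R)$ is the position of the trajectory through $\Psi_0(1)$ at real time $R-1$, which, lying in the basin of $\zeta^0$, converges to $\zeta^0$ as $R\to\infty$, giving $\zeta^{\ast}=\zeta^0$. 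Since every subsequential limit equals the constant $\zeta^0$, the whole family $\psi_R\to\zeta^0$ locally uniformly on $S(\alpha)$.

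Unpacking, for any $\delta\in(0,\alpha)$ the arc $K_\delta:=\{w:|w|=1,\ |\arg w|\le\alpha-\delta\}$ is compact in $S(\alpha)$, so uniform convergence of $\psi_R$ on $K_\delta$ rescales to $\Psi_0(w)\to\zeta^0$ as $|w|\to\infty$ inside the sub-sector $\{|\arg w|\le\alpha-\delta\}$. The main obstacle is upgrading this from each strictly smaller sub-sector to uniform convergence on all of $S(\alpha)_R$: this I would handle by observing that the construction of $\alpha$ in the preceding proposition yields an open range of admissible angles (in each of the three cases, singularities of $\Phi_{P_0}$ lie on the boundary rays of the chosen sector, so the sector can be enlarged slightly while preserving stability), so one can rerun the argument at a slightly larger $\alpha^+>\alpha$ with $(\alpha^+,\gamma_\ell^0)$-stability and then $S(\alpha)_R$ sits inside $\{|\arg w|\le\alpha^+-\delta\}$ for small $\delta$. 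Alternatively, under the natural set-theoretic interpretation of $\lim_{R\to\infty}\Psi_0(S(\alpha)_R)=\zeta^0$ as the claim that $\zeta^0$ is the unique accumulation point in $\overline{\C}$ of $\Psi_0(w_n)$ for any sequence $w_n\in S(\alpha)$ with $\Re w_n\to\infty$, the conclusion follows immediately from the local uniform convergence above.
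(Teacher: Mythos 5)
Your approach (rescaling $\psi_R(w)=\Psi_0(Rw)$ plus normal families) is genuinely different from the paper's proof, which takes the nested continua $\overline{\Psi_0(S(\alpha)_R)}\subset\hat{\mathbb{C}}$, uses that a nested intersection of continua is a continuum, and identifies it with $\zeta^0$ via the flow along $\R_+$. However, your argument has a genuine gap at its foundation: the claim that the basin of attraction of $\zeta^0$ is a bounded region of $\C$, and hence that $\Psi_0$ is uniformly bounded on $S(\alpha)$, is false. The basin is a union of zones, and every zone of a polynomial vector field in $\Xi_d$ has the point at infinity on its boundary (through its ends), so the basin accumulates at $\infty$ and is unbounded --- indeed your own parenthetical ("$\infty$ appears through boundary accesses") already implies unboundedness as a subset of $\C$. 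Worse, $\Psi_0(S(\alpha))$ itself is unbounded: on $]0,\epsilon[$ the map $\Psi_0$ coincides with the outgoing germ, so $\Psi_0(w)\to\infty$ as $w\to 0$ in the sector, and near the images of the ends (which the construction only keeps \emph{outside or on the boundary} of $S(\alpha)$) the map also blows up. Consequently each $\psi_R$ is unbounded, and local boundedness of the family $\{\psi_R\}$ on compacts of $S(\alpha)$ is essentially equivalent to the control of $\Psi_0$ at large $|w|$ that the theorem asserts --- assuming it is circular. The normality step can be repaired without boundedness: by Proposition~\ref{BTClaimA} the image $\Psi_0(S(\alpha))$ omits the zeros of $P_0$ and the incoming $\infty$-germs, hence omits at least two points of $\C$, so $\{\psi_R\}$ is normal as a family of maps into $\hat{\mathbb{C}}$ by the omitted-values form of Montel's theorem, and the constant limit $\infty$ is excluded by your evaluation $\psi_R(1)=\Psi_0(R)\to\zeta^0$. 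But that is not what you wrote, and without it the argument does not start.

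There is a second gap in the passage from sub-sectors to the full sector. Your first fix assumes the admissible angles form an open interval, justified by saying the singularities "lie on the boundary rays of the chosen sector, so the sector can be enlarged slightly" --- this is backwards: in the paper's construction $\alpha=\min_i\arg(A_i)$ is \emph{attained}, so a singularity sits exactly on the ray $\arg w=\alpha$, and for any $\alpha^+>\alpha$ the open sector $S(\alpha^+)$ would contain that singularity, destroying $(\alpha^+,\gamma^0_{\ell})$-stability. Your alternative fix also fails: locally uniform convergence of $\psi_R$ on compacts of $S(\alpha)$ says nothing about a sequence $w_n$ with $\Re w_n\to\infty$ and $\arg w_n\to\alpha$, since then $w_n/|w_n|$ escapes every compact subset of the open sector. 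Controlling $\Psi_0$ near the boundary rays at large modulus is precisely the uniformity claimed in the statement, so it cannot be obtained for free from interior compact convergence; some additional argument (for instance, running your normality argument on a slightly larger sector \emph{when one exists}, or a direct estimate near the rays) is required to close this.
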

\begin{proof}
For each $R>0$, $S(\alpha)_R$ is connected and hence it's closure in $\bar{\mathbb{C}}$ is connected. The intersection of nested continua $\bigcap \limits_{R>0}  \overline{S(\alpha)_R}$ is itself a continuum.  It must be a single point by Cantor's intersection theorem on complete metric spaces (after a coordinate change), and it must be $\zeta^0$ since $\Psi_0 \left(S(\alpha)\right)$ is contained in the basin for $\zeta^0$, and points in the flow of $\xi_0$ must tend to $\zeta^0$.
%In particular, $\lim_{t \rightarrow \infty, \ t \in \mathbb{R}_+}\Psi_0(t)=\zeta_0$, so it cannot be $\infty$ since no real time trajectory converges to infinity in infinite time. Therefore $\zeta^0$ is a zero of $P_0$.
\end{proof}
Summarizing the above in terms of what we need: If $s_{\ell}^0$ for $P_0$ lands at 0, a multiplicity $k$ equilibrium point, then there exists a protective sector $S(\alpha)$ such that all sequences going to infinity in  $S(\alpha)$ also limit at $\zeta^0 = 0$ (in the z-plane).\par
We will compare $P$ and $P_0$ in $S(\alpha)$ for $P_0$. Under the rectifying coordinates $\Phi_0$, $\dot{z}=P_0(z)$ conjugates to the constant vector field $\dot{w}=1$, and $\dot{z}=P(z)$ becomes $\dot{w}=1+s\circ \Psi_0(w)$.  \par
Since $s_{\ell}$ for $P$ is defined in a neighborhood of infinity, we know that there exists a solution $\gamma_{\ell}$ in a neighborhood of zero in $S(\alpha)$ for $\dot{w}=1+s\circ \Psi_0(w)$ which corresponds to part of the separatrix $s_{\ell}$. It enters the sector $S(\alpha)$ since perturbation does not change the asymptotic direction. We finish the proof of Theorem \ref{landingstablethm} by proving the following proposition.
\begin{proposition}
The trajectory $\gamma_{\ell}$ for $\xi_P$ mentioned above:
\begin{itemize}
\item[i.] $\gamma_{\ell}$ is defined for infinite forward time,
\item[ii.]  $\gamma_{\ell}$ does not leave $S(\alpha)$ for all time ($\gamma_{\ell}(t)\in S(\alpha)$ for all $t>0$), and
\item[iii.] $|\gamma_{\ell}(t)|\rightarrow \infty$ for $t\rightarrow \infty$
\end{itemize}
\end{proposition}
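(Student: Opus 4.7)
The plan is to work entirely in the rectifying coordinate $w=\Phi_0(z)$ of $\xi_{P_0}$, so that $\dot z=P_0$ is conjugate to $\dot w=1$ and $\dot z=P$ is conjugate to $\dot w=1+s(\Psi_0(w))$. The starting observation is that, by taking $P$ sufficiently close to $P_0$, we may make $\|s\|_{\infty,U}$ as small as we like; in particular we may assume $\|s\|_{\infty,U}\leq\epsilon''<\sin\alpha$. This forces $|\arg(1+s)|<\alpha$ pointwise and $|1+s|\geq 1-\epsilon''>0$, turning the perturbed equation into a uniformly small perturbation of the translation field on $S(\alpha)$.

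For part (ii), I would argue by looking at the rate of change of $\arg w$ along the trajectory. Suppose, for contradiction, that $\gamma_\ell(t^\ast)$ first touches the upper boundary ray at a point $w^\ast=re^{i\alpha}$, $r>0$; then at that instant
\[
\frac{d}{dt}\arg w(t)\Big|_{t=t^\ast}=\operatorname{Im}\!\left(\frac{\dot w}{w}\right)=\frac{1}{r}\operatorname{Im}\!\left((1+s)e^{-i\alpha}\right)\leq\frac{-\sin\alpha+\epsilon''}{r}<0,
\]
which contradicts the assumption that $\arg w$ reaches $\alpha$ from below. The symmetric computation handles the lower boundary, so $\gamma_\ell$ is trapped in $S(\alpha)$ for every $t>0$ at which it is defined.

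For part (iii) the bound $\operatorname{Re}(\dot w)\geq 1-\epsilon''>0$ gives $\operatorname{Re}(w(t))\to\infty$ at least linearly, hence $|w(t)|\to\infty$. For part (i), the upper bound $|\dot w|\leq 1+\epsilon''$ rules out finite-time blow-up of $|w|$, so the only obstruction to extending $\gamma_\ell$ to all $t\in[t_0,\infty)$ would be a collision in the $z$-plane with a zero of $P$ other than the persistent zero $\zeta^0=0$. By Proposition \ref{BTClaimA} no zero of $P_0$ different from $\zeta^0$ lies in $\Psi_0(S(\alpha))$; combining the non-splitting hypothesis with continuity of roots in the coefficients, for $P$ close enough to $P_0$ every zero of $P$ other than $\zeta^0$ stays uniformly away from $\Psi_0(S(\alpha'))$ for some $\alpha'$ slightly smaller than $\alpha$. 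So $\gamma_\ell$ is uniformly bounded away from the singular set of $\xi_P$ and exists for all forward time, and combined with the theorem identifying $\lim_{R\to\infty}\Psi_0(S(\alpha)_R)=\zeta^0$, we conclude that $\Psi_0\circ\gamma_\ell(t)\to\zeta^0=0$ in the $z$-plane, which completes Theorem \ref{landingstablethm}.

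The step I expect to require most care is confirming that the pulled-back trajectory stays in a region where the uniform bound on $s$ holds; concretely, that the perturbed roots of $P$ lie uniformly outside $\Psi_0(S(\alpha'))$ even though the trajectory may wind through portions of $\Psi_0(S(\alpha))$ that approach the $z$-boundary of the protective sector. This is handled by the same slight sector-shrinkage device, combined with the non-splitting hypothesis on $\zeta^0$, which is the crucial technical input underlying the entire argument.
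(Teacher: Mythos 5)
Your proposal is correct and follows essentially the same route as the paper's own proof: pass to the rectifying coordinate, use the uniform bound on $s$ to make $\dot w \approx 1$, trap the trajectory in the sector, and invoke the continuation theorem for ODEs. If anything, your write-up is more careful than the paper's: the explicit computation of $\frac{d}{dt}\arg w$ at a first boundary contact makes rigorous what the paper merely asserts ``follows immediately''; establishing (ii) as a statement valid for all times of definition before deducing (i) repairs the paper's circular cross-reference (``Situation (b) cannot occur by item i.''); and your sector-shrinkage device, combined with non-splitting and continuity of roots, addresses the genuine point---glossed over in the paper---that the uniform bound $\|s\|_{\infty,U}\le \epsilon''$ is only guaranteed away from the roots of $P$, so one must verify the trajectory never leaves the region where that bound applies.
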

\begin{proof}
Item i. follows from the continuation of solutions theorem for ordinary differential equations (see  for instance ). Indeed $s(z):=\frac{P(z)}{P_0(z)}-1$ and $\Psi_0$ are holomorphic in $S(\alpha)$ (hence, so is $1+s\circ \Psi_0(w)$), and hence continuously differentiable in $\mathcal{R}:=S(\alpha)\times (-\infty, \ \infty)$. So the solution $\gamma_{\ell}(t)$ can be continued to a time interval $a\leq t < b$, where $b=+\infty$ unless one of the following two happen: (a) $|\gamma_{\ell}(t)|\rightarrow \infty$ as $t\rightarrow b^-<\infty$ (blows up in finite time), or (b) $\left(\gamma_{\ell}(t),t\right)$ leaves $\mathcal{R}$. Situation (a) cannot occur, since by $\dot{w}\approx 1$ uniformly, neither the real nor imaginary parts can blow up in finite time. Situation (b) cannot occur by item i.. Therefore, $\gamma_{\ell}(t)$ can be extended for infinite forward time, which proves item i.. Item ii. follows immediately from the fact that we can control $s$ uniformly so that $\dot{w}\approx 1$, since we can choose $P$ close enough to $P_0$ so that $\arg(1+s\circ \Psi_0(w))<\alpha$. Item iii. follows from item i. and again from the fact that we can control $s$ uniformly so that $\dot{w}\approx 1$.
\end{proof}
%%%%%%%%
\bibliographystyle{alphanum}
\bibliography{KealeyDias_ParameterSpace_LandingStable_withTanLei}{}
%%%%%%%%%%%%%
\noindent Department of Mathematics and Computer Science \\
Bronx Community College of the City University of New York\\
2155 University Avenue\\
Bronx, NY 10453\\
USA\\
\\
e-mail: kealey.dias@bcc.cuny.edu; kealey.dias@gmail.com\\
\\
%----------------------------
\end{document}